\DeclareSymbolFont{AMSb}{U}{msb}{m}{n}
\DeclareSymbolFontAlphabet{\Bbb}{AMSb}
\newtheorem{theorem}{Theorem}[section]
\newtheorem{definition}[theorem]{Definition}
\newtheorem{lemma}[theorem]{Lemma}
\newtheorem{corollary}[theorem]{Corollary}
\newtheorem{proposition}[theorem]{Proposition}
\newtheorem{example}[theorem]{Example}
\newcommand{\norm}[1]{\left\Vert#1\right\Vert}
\newcommand{\abs}[1]{\left\vert#1\right\vert}
\newcommand{\R}{\mathbb{R}}
\newcommand{\Z}{\mathbb{Z}}
\newcommand{\spec}{\operatorname{{Spec}}}
\newcommand{\Emb}{\textrm{Emb}}
\newcommand{\Diff}{\textrm{Diff}}
\newcommand{\Hom}{\textrm{Hom}}
\newcommand{\Fix}{\textrm{Fix}}
\newcommand{\Per}{\textrm{Per}}
\newcommand{\ind}{\textrm{ind}}
\newcommand{\dg}{\textrm{deg}}
\title{\bf Global Saddles for Planar Maps}
\author{B. Alarc\'on$^a$, S.B.S.D. Castro$^{b,c}$ and I.S. Labouriau$^{b,d}$}
\date{{}}
\begin{document}
\maketitle

\noindent{\small $a$ Departamento de Matem\'atica Aplicada, Instituto de Matem\'atica e Estat\'\i stica, Universidade Federal Fluminense, \\
Rua Professor Marcos Waldemar de Freitas Reis, S/N, Bloco H, \\
Campus do Gragoat\'a, 
CEP 24.210 -- 201, S\~ao Domingos --- Niter\'oi, RJ, Brasil}\\
{\small $b$ Centro de Matem\'atica da Universidade do Porto,\\ Rua do Campo Alegre 687, 4169-007 Porto, Portugal.}\\
{\small $c$ Faculdade de Economia do Porto, \\ Rua Dr. Roberto Frias, 4200-464 Porto, Portugal.}\\
{\small $d$ Faculdade de Ci\^encias da Universidade do Porto,\\ Rua do Campo Alegre 687, 4169-007 Porto, Portugal.}
\bigbreak

\begin{abstract}
We study the dynamics of planar diffeomorphisms having a unique fixed point that is a hyperbolic local saddle. We obtain sufficient conditions under which the fixed point is a global saddle.
We also address the special case of $D_2$-symmetric maps, for which we obtain a similar result for $C^1$ homeomorphisms. Some applications to differential equations are also given.
\end{abstract}
\vfill
\noindent AMS 2010  classification:
{Primary: 54H20; Secondary: 37C80, 37B99.}\\
Keywords and phrases: {Planar maps, symmetry, local and global dynamics, saddles.}
\thispagestyle{empty}
\newpage

\section{Introduction}
The study of global dynamics has long been of interest.
Particular attention has been given to the question of inferring global results from local behaviour, when a unique fixed point is either a local attractor or repellor.
One famous instance is the Markus-Yamabe conjecture \cite{vanDenEssen}, its proof for dimension 2
by Gutierrez \cite{Gutierrez}, and several counterexamples for continuous time higher dimensional dynamics, and  for discrete time in dimension greater than or equal to 2.
Interest in this has then extended to planar discrete dynamics. The case when the unique fixed point is a local saddle was addressed in \cite{sillapepe} for $C^1$ vector fields. However, the problem of characterising the existence of a global saddle for planar diffeomorphisms is still open.

The presence of symmetry in a dynamical system creates special features that may be used to obtain global results.
Planar dynamics with symmetry, when the fixed point is either an attractor or a repellor, has been addressed by the authors in \cite{AlarconDenjoy,SofisbeSzlenk,SofisbeGlobal}.
There results, as well as those  without extra assumptions on symmetry, ignore the important case when the fixed point is a local saddle.

A local saddle for a map $f:\R^2\rightarrow\R^2$ is a fixed point of $f$, without loss of generality the origin, such that $Df(0)$ has eigenvalues $\lambda$, $\mu$ satisfying $0<|\lambda|<1<|\mu|$.
When $\lambda, \mu>0$, the local saddle is called direct.
Otherwise, it is called twisted.

When we extend a local saddle globally, we ask for the local stable and unstable manifold to extend to infinity without homoclinic contacts.
We stress that our concept of a global saddle is weaker than demanding that it be globally conjugated to a linear saddle.

In the present article we conclude the study of global discrete planar dynamics by looking at maps with a saddle at the origin, with results concerning maps both with and without symmetry.

The non-symmetric case has been addressed by Hirsch \cite{sillaHirsch} for direct saddles of orientation preserving diffeomorphisms such that every fixed point has a negative index. We obtain results  for diffeomorphisms without period-2 points and include the case of twisted saddles.
A simple example shows that our hypotheses are minimal.

Additionally, in the symmetric case, our results include fixed points of positive index.
We also relax the diffeomorphism hypothesis to include homeomorphisms of class $C^1$.
When the symmetry group possesses two reflections, we show that the stable and unstable manifolds divide the plane in four connected components, that are either $f$-invariant or permuted by the dynamics.

The article is organised as follows: the next section contains definitions and preliminary results, Section~\ref{secGlobalSaddle} addresses dynamics without symmetry and ends with  an example showing that our hypotheses are minimal.
Section \ref{secSymm}, after a few results on symmetric maps, deals with the symmetric case. Section \ref{secApp} gives some applications of the main results to differential equations.

\section{Background and definitions}

In this article we  work with sets of planar maps, for which we introduce some notation:
$\Emb(\R^2)$ for continuous and injective maps, 
$\Hom(\R^2)$ for homeomorphisms and
$\Diff(\R^2)$ for $C^1$ diffeomorphisms.
A superscript $+$ as in  $\Diff^+(\R^2)$ indicates the subset of orientation preserving maps.

We are concerned with local and global saddles.
We start with the definitions of  local saddle, stable and unstable local manifolds and homoclinic contacts for $C^ 1$ maps, as in Hirsch  \cite{sillaHirsch}, and then proceed to  define (topological) global saddle.

Given a $C^1$ map  $f: \R^2 \to \R^2$ with  $0\in \Fix(f)$, we will say that $0$ is a
{\em local saddle} if the derivative $Df(0)$ has eigenvalues $\lambda, \mu \in \R$ satisfying $0<|\lambda|<1<|\mu|$. If both eigenvalues of $Df(0)$ are strictly positive, $\lambda, \mu >0$, we will say that $0$ is a
 local {\em direct saddle}. In other cases $0$ will be called a
 local  {\em twisted saddle}.
 Note that if $0$  is a local saddle for $f$, then it is a local direct saddle for $f^2$.

The Grobman-Hartman theorem implies that if $0$ is a local saddle then there is an open neighbourhood $U$ of $0$ and a
homeomorphism $h$ defined in $U$, with $h(0)=0$,  that conjugates $f$ into  a linear map with eigenvalues $\lambda, \mu$.
Thus $h^{-1}$ maps the eigenspaces of the linear map into two curves in $U$, the {\em local stable} and {\em local unstable manifolds} of $f$.

The (global) {\em stable curve}, containing the local stable manifold, is defined as
$$
W^{s}
=W^{s}(0,f)=\{x \in \R^2 : \lim_{n\to \infty} f^{n}(x)=0\}\ .
$$
If $f$ is invertible, then the (global) {\em unstable curve} is given by $W^{u}=W^{u}(0,f)=W^{s}(0,f^{-1})$.
Each one of these curves may be parametrised by a $C^1$ immersion $\tau: \R \to \R^2$ with
$\tau(0)=0$. The images of $(-\infty,0]$ and $[0,+\infty)$ are the two {\em stable  branches} ({\em unstable  branches}, respectively) at $0$. These branches will be denoted by $\beta^{-}$ and $\beta^{+}$, respectively.

For the branch $\beta^{+}$ and $\beta^{-}$ parametrised by a continuous bijection  $\zeta: [0,+\infty) \to \beta^{+}$ and $\zeta': (-\infty,0] \to \beta^{-}$, respectively,  consider the limit sets
$$
\mathcal{L}(\beta^{+})=\bigcap_{t\geq 0} \overline{\zeta([t, +\infty))}, \quad \mathcal{L}(\beta^{-})=\bigcap_{t\leq 0} \overline{\zeta'((-\infty,t])}
$$
that do not depend on the choice of parametrisation $\zeta$ and $\zeta'$, respectively.
This set  is closed and $f$-invariant.
We define the {\em  limit set} $\mathcal{L}(W^u)$ as the union of the limit sets of the two branches of $W^u$, the definition of  $\mathcal{L}(W^s)$ is analogous.

Given a continuous map $f: \R^2 \to \R^2$, we say that $p$ is a \emph{non-wandering point} of $f$ if for every neighbourhood $U$ of $p$ there exists an integer $n>0$ and a point $q\in U$ such that $f^n(q)\in U$. We denote the set of non-wandering points by $\Omega(f)$. We have
$$
\Fix(f)\subset \Per(f) \subset \Omega(f),
$$
where $\Fix(f)$ is the set of fixed points of $f$,  and $\Per(f)$ is the set of periodic points of $f$.

Let $\omega(p)$ be the set of points $q$ for which there is a sequence $n_j\to+\infty$ such that 
{  $f^{n_j}(p)\to q$.}
If $f\in Hom(\R^2)$ then $\alpha(p)$ denotes the set $\omega(p)$ under $f^{-1}$.


A point of the closed invariant set
$$
(W^u \cap W^s \setminus \{0\}) \cup (\mathcal{L}(W^u) \cap \overline{W^s}) \cup (\mathcal{L}(W^s) \cap \overline{W^u}).
$$
is called a  {\em homoclinic contact}.

\begin{definition}\label{def:global_saddle} Let $f: \R^2 \to \R^2$ be a $C^1$ homeomorphism such that $f(0)=0$.
We say that $0$ is a {\em global (topological) saddle} if $0$ is a local saddle, there are no homoclinic contacts  and $W^{s}(0,f)$, $W^{u}(0,f)$ are unbounded sets such that
 for all $p\notin  W^{s}(0,f)\cup W^{u}(0,f)\cup \{0\}$ both $\norm{f^n(p)}\to \infty$ and $\norm{f^{-n}(p)}\to \infty$ as $n$ goes to $\infty$.
\end{definition}

If the local saddle in Definition~\ref{def:global_saddle} is direct, then we talk of a {\em global direct saddle}.

Note that a global (topological) saddle may not be conjugated to the linear saddle because more complex features can appear.
{  For instance, a  global topological saddle may have more than the four elliptic components at infinity that occur for 
a linear saddle.}
See Figure \ref{figuraReebComp}. {  An interesting discussion, as well as illustrations, for the continuous-time case may be found in \cite{sillapepe}.}

\begin{figure}[hh]
\centering

\includegraphics[width=5cm]{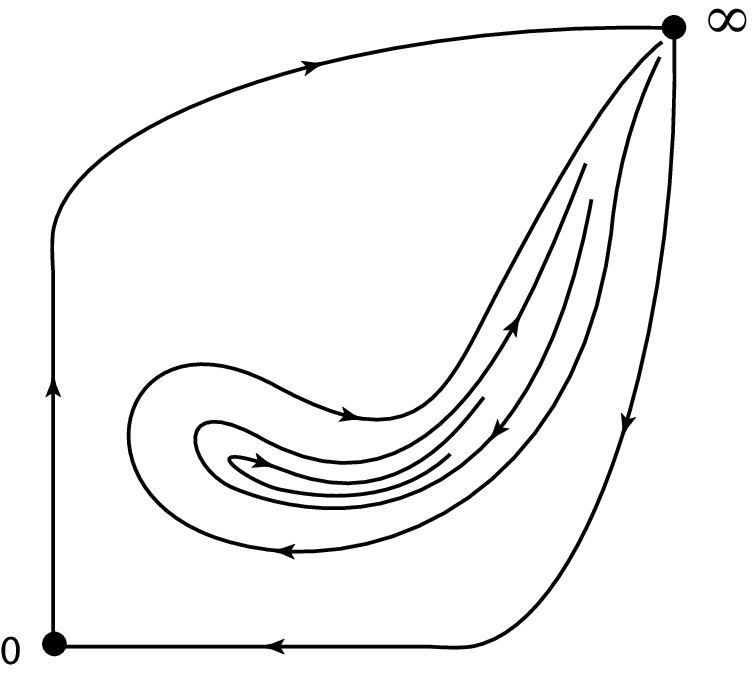} \qquad\qquad
\includegraphics[width=5cm]{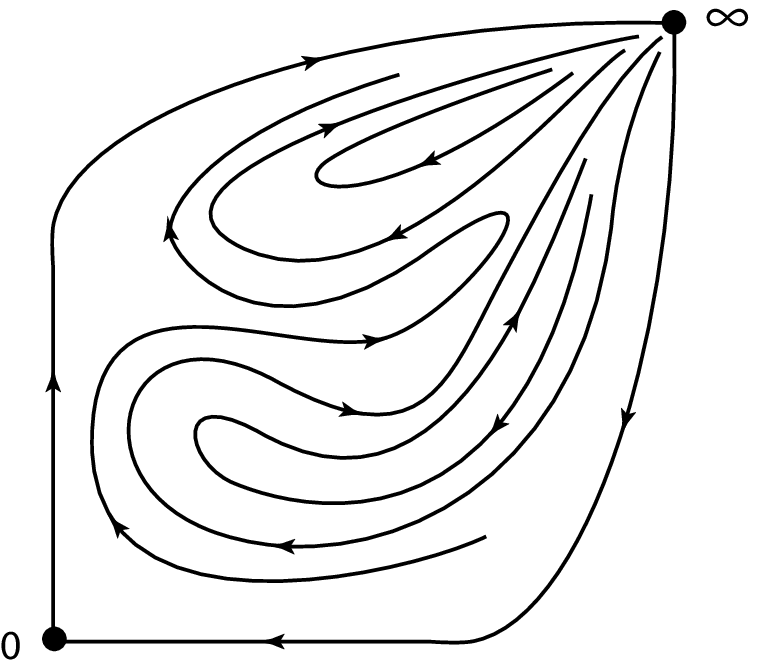}
\caption{{Dynamics on the first quadrant of two non conjugated global saddles.
 The linear saddle  (left)  has four elliptic components at infinity.
A global saddle (right) not conjugated to the linear saddle may have more elliptic components, for instance, it may have an extra component on the first quadrant.
}
\label{figuraReebComp}}
\end{figure}


\begin{definition} A map $f\in \Hom(\R^2)$ is {\em free} if, given any topological disk $D\subset \R^2$ such that $f(D)\cap D=\emptyset$, then $f^n(D)\cap f^m(D)=\emptyset$ for each $n,m\in \Z$, $n\neq m$.
\end{definition}
%
%

Let $f\in \Hom(\R^2)$  and $\tilde{f}\in \mathbb{S}^2$ be the extension of $f$ to $\R^2\cup\{\infty\}\cong \mathbb{S}^2$ by letting $\tilde{f}(\infty)=\infty$. The next result is a version of  Lemma 3.4 in \cite[page 456]{Brown}.

\begin{lemma} \label{freetridynHom}  If $\tilde{f}\in \Hom(\mathbb{S}^2)$ is free, then

\begin{itemize}
\item[(a)] $\Fix(\tilde{f})\neq \emptyset $.
\item[(b)] For each $x\in \mathbb{S}^2$, $\limsup_{|n|\to\infty} \tilde{f}^n(x)\subset \Fix(\tilde{f})$.
\item[(c)] If $\Fix(\tilde{f})$ is totally disconnected then for each $x\in \mathbb{S}^2$ there exist points $\alpha(x), \omega(x)$ (not necessarily distinct) of $\Fix(\tilde{f})$ such that $\lim_{n\to\infty} \tilde{f}^n(x)=\omega(x)$ and $\lim_{n\to -\infty} \tilde{f}^n(x)=\alpha(x)$.
\end{itemize}
\end{lemma}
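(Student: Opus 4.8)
The plan is to follow Brown's argument, observing that all three parts flow from a single fact: \emph{freeness forces every non-wandering point to be fixed}, i.e. $\Omega(\tilde{f})\subseteq\Fix(\tilde{f})$. To establish this, let $z\in\mathbb{S}^2$ with $\tilde{f}(z)\neq z$. By continuity of $\tilde{f}$ there is a topological disk $D\ni z$ small enough that $\tilde{f}(D)\cap D=\emptyset$, so $D$ is a free disk. Taking $m=0$ in the definition of freeness gives $\tilde{f}^n(D)\cap D=\emptyset$ for every $n\neq 0$, so no point of $D$ ever returns to $D$ under positive iteration and $z\notin\Omega(\tilde{f})$. Since the argument uses only continuity, compactness and freeness, it is insensitive to orientation, which matches the generality of the statement.

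For (b), I would first note that any accumulation point $y=\lim_j \tilde{f}^{n_j}(x)$ of an orbit, with $|n_j|\to\infty$, is non-wandering: given a neighbourhood $U\ni y$, choose two indices $n_j<n_k$ of the same sign with $\tilde{f}^{n_j}(x),\tilde{f}^{n_k}(x)\in U$; then $q=\tilde{f}^{n_j}(x)\in U$ satisfies $\tilde{f}^{\,n_k-n_j}(q)\in U$ with $n_k-n_j>0$. Hence every such $y$ lies in $\Omega(\tilde{f})\subseteq\Fix(\tilde{f})$, which is exactly $\limsup_{|n|\to\infty}\tilde{f}^n(x)\subseteq\Fix(\tilde{f})$. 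Statement (a) is then immediate from compactness of $\mathbb{S}^2$: the forward orbit $\{\tilde{f}^n(x)\}_{n\geq 0}$ has a convergent subsequence whose limit lies in $\omega(x)\subseteq\Fix(\tilde{f})$, so $\Fix(\tilde{f})\neq\emptyset$.

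For (c) the task is to upgrade ``the limit set is contained in $\Fix(\tilde{f})$'' to ``the limit set is a single point'', and this is where total disconnectedness is used. Fix $x$, write $y_n=\tilde{f}^n(x)$, and recall $d(y_n,\omega(x))\to 0$ in the compact sphere, so by (b) we have $d(y_n,\Fix(\tilde{f}))\to 0$ as $n\to\infty$. Choosing fixed points $z_n$ with $d(y_n,z_n)\to 0$ and using uniform continuity of $\tilde{f}$ gives $d(\tilde{f}(y_n),y_n)\le d(\tilde{f}(y_n),\tilde{f}(z_n))+d(z_n,y_n)\to 0$; that is, the successive steps of the orbit shrink to zero. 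Since a sequence with vanishing steps in a compact metric space has connected limit set, $\omega(x)$ is a connected subset of the totally disconnected set $\Fix(\tilde{f})$, hence a single point; thus $\tilde{f}^n(x)\to\omega(x)$. Applying the same reasoning to $\tilde{f}^{-1}$, which is again free and has the same (totally disconnected) fixed-point set, yields the point $\alpha(x)$ and the backward convergence.

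The free-disk construction and the compactness arguments are routine; the main obstacle is the connectedness step in (c). I would spell out the supporting lemma that a sequence with $d(y_{n+1},y_n)\to 0$ in a compact space has connected set of accumulation points: were that set to split into two pieces at positive distance, the vanishing steps would force an accumulation point strictly between them, contradicting the splitting. It is exactly this connectedness, combined with total disconnectedness of $\Fix(\tilde{f})$, that collapses each limit set to a single fixed point.
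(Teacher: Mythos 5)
Your argument is correct. Note that the paper itself offers no proof of this lemma: it is stated as ``a version of Lemma~3.4 in [Brown, page 456]'' and simply cited. Your reconstruction follows the standard route behind Brown's result --- freeness forces $\Omega(\tilde{f})\subseteq\Fix(\tilde{f})$ via small free disks, which gives (a) and (b) by compactness, and (c) follows from the classical fact that an orbit whose consecutive steps shrink to zero has a connected limit set, which total disconnectedness of $\Fix(\tilde{f})$ then collapses to a point --- so it is a faithful, self-contained substitute for the citation.
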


For $f\in \Hom(\R^2)$ we write $\omega(p)=\infty$ when $\omega(p)=\infty$ for $\tilde{f}\in \mathbb{S}^2$.
Analogously, we introduce $\alpha(p)=\infty$.
Hence,  $\omega(p)=\infty$ means that $\norm{f^n(p)}\to \infty$ as $n$ goes to $ \infty$ and $\alpha(p)=\infty$ means that $\norm{f^n(p)}\to \infty$ as $n$ goes to $ -\infty$.



We say that $f\in \Hom(\R^2)$ has {\em trivial dynamics} if $\alpha(p), \omega(p)\subset \Fix(\tilde{f})$ for every $p\in \R^2$. Observe that in the case of a unique fixed point $q$ of $f$, $\Fix(\tilde{f})=\{q,\infty\}$. \\


The next result is a consequence of Lemma \ref{freetridynHom}.

\begin{proposition}
\label{freetridyn} Assume that $f\in \Hom(\R^2)$ is free, then it has trivial dynamics.
\end{proposition}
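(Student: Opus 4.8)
The plan is to transfer freeness from the plane to the sphere and then invoke Lemma~\ref{freetridynHom}(b). First I would record that the extension $\tilde{f}$ is genuinely a homeomorphism of $\mathbb{S}^2$: since $f\in\Hom(\R^2)$, both $f$ and $f^{-1}$ are proper maps (preimages of compact sets are compact), so setting $\tilde{f}(\infty)=\infty$ yields a continuous bijection with continuous inverse on $\mathbb{S}^2\cong\R^2\cup\{\infty\}$. Thus $\tilde{f}\in\Hom(\mathbb{S}^2)$ and Lemma~\ref{freetridynHom} becomes applicable once its hypothesis is checked.

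The key step is to show that $\tilde{f}$ is free. Let $D\subset\mathbb{S}^2$ be any topological disk with $\tilde{f}(D)\cap D=\emptyset$. The point is that such a $D$ cannot contain $\infty$: since $\tilde{f}(\infty)=\infty$, if $\infty\in D$ then $\infty\in\tilde{f}(D)\cap D$, contradicting the assumption. Hence $\infty\notin D$, so $D\subset\R^2$ is a topological disk of the plane and $\tilde{f}$ agrees with $f$ on it, which reduces the hypothesis to $f(D)\cap D=\emptyset$. Freeness of $f$ now gives $f^n(D)\cap f^m(D)=\emptyset$, equivalently $\tilde{f}^n(D)\cap\tilde{f}^m(D)=\emptyset$, for all $n\neq m$. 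Therefore $\tilde{f}$ is free.

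With freeness established, I would apply Lemma~\ref{freetridynHom}(b): for each $p\in\R^2\subset\mathbb{S}^2$ we have $\limsup_{|n|\to\infty}\tilde{f}^n(p)\subset\Fix(\tilde{f})$. Since the forward limit set $\omega(p)$ and the backward limit set $\alpha(p)$ are each contained in the set of subsequential limits of the sequence $(\tilde{f}^n(p))$ as $|n|\to\infty$, it follows that $\omega(p),\alpha(p)\subset\Fix(\tilde{f})$ for every $p$, which is exactly the assertion that $f$ has trivial dynamics.

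I expect no serious obstacle: the proposition is essentially a repackaging of Lemma~\ref{freetridynHom}(b), and the only genuine content is the observation that any disk satisfying the freeness hypothesis must avoid the fixed point $\infty$, so that freeness on $\mathbb{S}^2$ follows for free from freeness on $\R^2$. The one point deserving mild care is the identification of a topological disk of $\mathbb{S}^2$ missing $\infty$ with a topological disk of $\R^2$, but this is immediate from the inclusion $\R^2\hookrightarrow\mathbb{S}^2$ and causes no difficulty.
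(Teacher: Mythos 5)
Your proof is correct and follows exactly the route the paper intends: the paper offers no written proof beyond the remark that the proposition is a consequence of Lemma~\ref{freetridynHom}, and your argument is precisely the natural filling-in of that remark. The one piece of genuine content you supply --- that a topological disk $D\subset\mathbb{S}^2$ with $\tilde{f}(D)\cap D=\emptyset$ cannot contain the fixed point $\infty$, so freeness of $f$ on $\R^2$ transfers to freeness of $\tilde{f}$ on $\mathbb{S}^2$ --- is exactly the right observation and is handled correctly.
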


In particular, all non-wandering points of planar free homeomorphisms are fixed points.
So $\Fix(f)=\Omega(f)$ for all free $f\in \Hom(\R^2)$ --- more details and examples can be found in \cite{Brown}).
The  trivial dynamics property is analogous to the Poincar\'e-Bendixon Theorem for continuous time systems. When free becomes  too strong a condition, we shall use
the fixed point index.

\begin{definition} We define the {\em index} of a fixed point $p$ of a continuous map $f:\R^2\to \R^2$ as $$\ind (f,p)=\dg  (I-f,D),$$ where $I$ is the identity map in $\R^2$, and $D\in \R^2$ is a topological disc which is a neighbourhood of $p$, and $\Fix(f)\cap \partial{D}=\emptyset$ and $\dg  (I-f,D)$ is the Brouwer degree of the map $I-f$.

Let $L$ be an open simply connected subset of the plane such that $\Fix(f)\cap \partial L=\emptyset$, we define
$\ind(f,L)$ as $\sum_{p\in Fix(f)\cap L} \ind(f,p)$.
\end{definition}

The next theorem shows the relation between free homeomorphisms and degree theory.

\begin{theorem}[Brown \cite{Brown}, Theorem 5.7]\label{teoFreeHom} Assume that $f\in \Hom^{+}(\R^2)$ and that  for every Jordan curve $\Psi\subset \R^2\setminus \Fix(f)$ with $\widehat{\Psi}$ as bounded component we have
$$
\ind (f,\widehat{\Psi})\neq 1\ .
$$
Then $f$ is free.
\end{theorem}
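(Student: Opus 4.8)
The plan is to prove the contrapositive: assuming $f$ is \emph{not} free, I would produce a Jordan curve $\Psi\subset\R^2\setminus\Fix(f)$ whose bounded complementary component $\widehat{\Psi}$ satisfies $\ind(f,\widehat{\Psi})=1$. This is exactly the negation of the hypothesis, so it suffices.

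First I would unwind the failure of freeness into a recurrence statement. If $f$ is not free, there is a topological disk $D$ with $f(D)\cap D=\emptyset$ yet $f^n(D)\cap f^m(D)\neq\emptyset$ for some $n\neq m$; say $n>m$. Applying $f^{-m}$ to this nonempty intersection gives $f^{k}(D)\cap D\neq\emptyset$ with $k=n-m>0$, and since $f(D)\cap D=\emptyset$ forbids $k=1$, one may take $k\geq 2$ minimal. Thus $D$ is a free disk that returns to itself under $f^{k}$; in the language of Brouwer theory, $\{D\}$ is a periodic free disk chain of period one.

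Second, I would invoke the Brouwer plane translation theorem in its index-theoretic refinement. The classical Brouwer lemma asserts that a fixed-point-free orientation-preserving homeomorphism of the plane is free; contrapositively, a recurrent free disk of the kind just produced cannot exist unless $\Fix(f)\neq\emptyset$. The sharper tool I need is Franks' disk-chain lemma, which upgrades this to an index statement: a periodic free disk chain for $f\in\Hom^{+}(\R^2)$ forces a Jordan domain $U$ that is trapped by the dynamics, with $\partial U$ disjoint from $\Fix(f)$ and $\dg(I-f,U)=1$. By the definition of the fixed point index this reads $\ind(f,U)=1$, so taking $\Psi=\partial U$ and $\widehat{\Psi}=U$ yields the required curve, completing the contraposition.

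The main obstacle is precisely this second step: passing from mere recurrence of the free disk to fixed point index \emph{exactly} $1$. Producing a fixed point at all is already the content of Brouwer's lemma, but controlling the index — ruling out cancellation and pinning the total at $1$ rather than merely at some nonzero value — is the delicate part, and it is where orientation preservation is indispensable. To build the trapping domain $U$ with the correct value of $\dg(I-f,\,\cdot\,)$ one must exploit the finer structure of planar orientation-preserving homeomorphisms, for instance translation arcs and Brouwer lines (or, in a more modern treatment, Le~Calvez's equivariant foliations). Once such a $U$ is constructed, the identification $\ind(f,\widehat{\Psi})=1$ is immediate from the definitions recalled above, and the theorem follows.
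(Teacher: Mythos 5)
The paper does not prove this statement at all: it is imported verbatim as Theorem~5.7 of Brown \cite{Brown}, so there is no internal proof to compare against. Your first step is correct and is the easy part: unwinding ``not free'' to a free disk $D$ (necessarily disjoint from $\Fix(f)$, since $f(D)\cap D=\emptyset$) with $f^{k}(D)\cap D\neq\emptyset$ for some minimal $k\geq 2$ is exactly right, and this is precisely a one-disk periodic free disk chain.

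The gap is in your second step, and it is essentially the whole theorem. The statement you invoke --- ``a periodic free disk chain for $f\in\Hom^{+}(\R^2)$ forces a Jordan domain $U$ with $\partial U\cap\Fix(f)=\emptyset$ and $\dg(I-f,U)=1$'' --- is, after your step one, literally the contrapositive of the theorem being proved. Citing it as ``Franks' disk-chain lemma'' therefore does not advance the argument unless you give (or at least correctly locate) an independent proof of that index statement; Franks' lemma in its usual form only produces \emph{a} fixed point, and upgrading to index exactly $1$ requires the translation-arc machinery (Brouwer's lemma in index form, as in Brown's own paper, or Le~Calvez--Sauzet). There is also a genuine technical point you pass over silently: the standard proof perturbs $f$ inside the free disks to create a periodic orbit and extracts an index-$1$ curve for the \emph{perturbed} map, and one must then argue that this curve, or a replacement, still avoids $\Fix(f)$ and carries index $1$ for $f$ itself. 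You correctly identify this step as ``the main obstacle,'' but identifying an obstacle is not the same as overcoming it; as written the proposal is an accurate road map of the standard route rather than a proof.
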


%
%

If $f$ is differentiable denote its spectrum by $\spec(f)$. We have:

\begin{lemma}[Corollary 2 in \cite{Alarcon-orbitas-periodicas}]\label{corPtoFijo}
Let $f:\R^2\rightarrow\R^2$ be a differentiable map such that for some $\varepsilon>0$,
$\spec(f)\cap [1,1+\varepsilon[=\emptyset$, then $f$ has at most one fixed point.
\end{lemma}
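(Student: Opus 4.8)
The plan is to reduce the uniqueness of the fixed point to an injectivity statement for the displacement map. I would set $g:\R^2\to\R^2$, $g(x)=f(x)-x$, so that the fixed points of $f$ are exactly the zeros of $g$. Since $Dg(x)=Df(x)-I$, the eigenvalues of $Dg(x)$ are those of $Df(x)$ shifted by $-1$; hence $\spec(g)$ is obtained from $\spec(f)$ by subtracting $1$, and the hypothesis $\spec(f)\cap[1,1+\varepsilon[=\emptyset$ becomes
\[
\spec(g)\cap[0,\varepsilon[=\emptyset .
\]
In particular $1\notin\spec(f)$, so $0\notin\spec(g)$ and $Dg(x)$ is invertible at every point; thus $g$ is already a local diffeomorphism everywhere.

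The key step is to upgrade this spectral gap to \emph{global} injectivity of $g$. Here I would invoke the planar spectral injectivity theorem of Fernández, Gutiérrez and Rabanal: a differentiable map $Y:\R^2\to\R^2$ whose spectrum misses a half-open interval $[0,\varepsilon[$ is injective. Applied to $Y=g$ this yields that $g$ is one-to-one. The conclusion is then immediate, for if $p,q\in\Fix(f)$ then $g(p)=0=g(q)$, whence $p=q$, so $f$ has at most one fixed point.

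I expect the injectivity step to be the only real obstacle, and it is essential that one exploit the full one-sided interval rather than merely the non-vanishing of the Jacobian. A naive degree-theoretic count does not suffice: a zero $p$ of $g$ carries local index $\operatorname{sign}\det Dg(p)$, and when the two real eigenvalues of $Df(p)$ lie on opposite sides of the gap (one below $1$, the other at least $1+\varepsilon$) this determinant is negative, so zeros of $g$ may have index $-1$ as well as $+1$; summing local indices therefore cannot by itself exclude a second fixed point. What genuinely rules out a second preimage is that $\spec(g)$ avoids the \emph{whole} interval $[0,\varepsilon[$, and exploiting this is precisely the content of the injectivity theorem being borrowed; I would not attempt to reprove it here, but rather cite it and apply it to the displacement map $g$.
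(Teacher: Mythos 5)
Your argument is correct and is essentially the intended one: the paper does not prove this lemma at all, but imports it as Corollary~2 of \cite{Alarcon-orbitas-periodicas}, where it is obtained exactly as you describe --- pass to the displacement map $g=f-\mathrm{Id}$, note that $\spec(g)=\spec(f)-1$ avoids $[0,\varepsilon[$, and invoke the Fern\'andez--Guti\'errez--Rabanal spectral injectivity theorem to conclude that $g$ has at most one zero. Your side remark that a pure degree count cannot substitute for the injectivity theorem (since zeros of $g$ may carry index $-1$ as well as $+1$) is also accurate.
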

%

Let $p\in \R^n$ and $f:\R^n \to \R^n$ be a continuous map. We denote by $\omega_2(p)=\{q\in \R^n : \lim \;f^{2n_k}(p)=q, \; \text{ for some sequence} \; 2n_k\to \infty \}$ the $\omega-$limit of $p$ with respect to $f^2$.
If $f$ is invertible then we define $\alpha_2(p)$ in a similar way.

\begin{lemma} [Lemma 3.1 in \cite{SofisbeGlobal}]\label{lemOmega2} Let $f:\R^n \to \R^n$  be a homeomorphism such that $f(0)=0$. Then, for $p\in \R^n$, the following hold:
\begin{enumerate}
\item[a)]  if $\omega_2(p)=\{0\}$, then   $\omega(p)=\{0\}$;
\item[ b)] if $\omega_2(p)=\infty$,   then
$\omega(p)=\infty$.
\end{enumerate}
\end{lemma}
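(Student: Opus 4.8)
The plan is to split the forward orbit of $p$ into its even-indexed and odd-indexed subsequences and to exploit that the odd subsequence is merely the image under $f$ of the even one. Since $\{f^n(p)\}_{n\ge 0}=\{f^{2n}(p)\}\cup\{f^{2n+1}(p)\}$ and $f^{2n+1}(p)=f^{2n}(f(p))$, the set of accumulation points decomposes as
$$
\omega(p)=\omega_2(p)\cup\omega_2(f(p)),
$$
so the whole argument reduces to understanding $\omega_2(f(p))$ in terms of $\omega_2(p)$.

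The core step I would establish is the transport identity $\omega_2(f(p))=f(\omega_2(p))$. The inclusion $f(\omega_2(p))\subseteq\omega_2(f(p))$ is immediate from continuity of $f$: if $f^{2n_k}(p)\to r$, then $f^{2n_k}(f(p))=f\!\left(f^{2n_k}(p)\right)\to f(r)$. The reverse inclusion is the delicate point and the place where the homeomorphism hypothesis is genuinely needed. Given $q\in\omega_2(f(p))$, say $f\!\left(f^{2n_k}(p)\right)\to q$, I apply the \emph{continuous} inverse $f^{-1}$ to recover convergence of the even subsequence itself: $f^{2n_k}(p)=f^{-1}\!\left(f\!\left(f^{2n_k}(p)\right)\right)\to f^{-1}(q)$, whence $f^{-1}(q)\in\omega_2(p)$ and $q=f\!\left(f^{-1}(q)\right)\in f(\omega_2(p))$. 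Without invertibility the odd subsequence could accumulate at points not seen by $\omega_2(p)$, so this is exactly where I expect the main obstacle to lie. Combining the two displays gives
$$
\omega(p)=\omega_2(p)\cup f(\omega_2(p)),
$$
and part a) follows at once: if $\omega_2(p)=\{0\}$ then, since $f(0)=0$, we obtain $\omega(p)=\{0\}\cup f(\{0\})=\{0\}$.

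For part b) I would run the identical argument on the one-point compactification $S^n=\R^n\cup\{\infty\}$ for the extension $\tilde f$ with $\tilde f(\infty)=\infty$. The preliminary observation is that $\tilde f\in\Hom(S^n)$: any homeomorphism of $\R^n$ is proper, since if $\norm{x_k}\to\infty$ while $f(x_k)\to y$ then $x_k=f^{-1}(f(x_k))\to f^{-1}(y)$, a contradiction; hence $f$ carries neighbourhoods of $\infty$ to neighbourhoods of $\infty$, so $\tilde f$ is continuous at $\infty$, and the same applied to $f^{-1}$ makes $\tilde f$ a homeomorphism fixing both $0$ and $\infty$. The transport identity $\omega(p)=\omega_2(p)\cup\tilde f(\omega_2(p))$ then holds verbatim in $S^n$, and the hypothesis $\omega_2(p)=\infty$ means precisely $f^{2n}(p)\to\infty$ in $S^n$; feeding in $\omega_2(p)=\{\infty\}$ together with $\tilde f(\infty)=\infty$ yields $\omega(p)=\{\infty\}$, that is $\omega(p)=\infty$ (equivalently $\norm{f^n(p)}\to\infty$). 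The two genuinely nontrivial ingredients are thus the reverse inclusion in the transport identity (continuity of $f^{-1}$) and, for b), the properness that legitimises replacing $f$ by the compactified homeomorphism $\tilde f$; everything else is bookkeeping on subsequences.
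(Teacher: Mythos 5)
Your proof is correct. Note that the paper itself gives no proof of this statement: it is quoted verbatim as Lemma~3.1 of \cite{SofisbeGlobal}, so there is no in-text argument to compare against. Your route --- the decomposition $\omega(p)=\omega_2(p)\cup\omega_2(f(p))$ together with the transport identity $\omega_2(f(p))=f(\omega_2(p))$, whose reverse inclusion uses continuity of $f^{-1}$, and the properness of planar homeomorphisms to pass to the one-point compactification for part~b) --- is the natural one and isolates exactly the two places where invertibility is needed; the cited source argues in essentially the same way by interleaving the even and odd subsequences and applying continuity of $f$ and $f^{-1}$.
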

%
%
%

The final ingredient to establish our results is the following:

\begin{theorem}[Hirsch, \cite{sillaHirsch}] \label{teoHirsch} Let $f\in \Diff^{+} (\R^2)$ be such that every fixed point is isolated and has index $\leq 0$. Then the following statements hold:

\begin{itemize}
\item[i)]
For every $x$, as $n$ goes to $\pm \infty$, either
$f^n(x)$ goes to a fixed point or $\norm{f^n(x)}\to \infty$.
\item[ii)]
For each direct saddle $p$, every homoclinic contact is
a fixed point different from $p$ and each branch at $p$ is
homeomorphic to $[0,\infty)$.
\item[iii)]
If the only fixed point
is a direct saddle $p$, then there are no homoclinic contacts and
every branch of $W^s(p)$ and of $W^u(p)$ is unbounded.
\end{itemize}
\end{theorem}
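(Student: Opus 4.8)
The plan is to derive everything from the freeness of $f$, exploiting the index hypothesis together with Brown's results already recalled. First I would note that if every fixed point is isolated with $\ind(f,\cdot)\leq 0$, then for any Jordan curve $\Psi\subset\R^2\setminus\Fix(f)$ the additivity of the index gives $\ind(f,\widehat\Psi)=\sum_{q\in\Fix(f)\cap\widehat\Psi}\ind(f,q)\leq 0$, hence $\ind(f,\widehat\Psi)\neq 1$. Since $f\in\Diff^+(\R^2)\subset\Hom^+(\R^2)$, Theorem~\ref{teoFreeHom} applies and $f$ is free, whereupon Proposition~\ref{freetridyn} gives trivial dynamics: $\alpha(x),\omega(x)\subset\Fix(\tilde f)$ for every $x$. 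The fixed points of $\tilde f$ are the isolated fixed points of $f$ together with $\infty$, a totally disconnected subset of $\mathbb S^2$, so Lemma~\ref{freetridynHom}(c) applied to $\tilde f$ shows that each forward and backward orbit converges to a single point of $\Fix(\tilde f)$. Read back in $\R^2$, this is precisely statement (i): as $n\to\pm\infty$, either $f^n(x)$ converges to a fixed point or $\norm{f^n(x)}\to\infty$.

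For part (ii), let $p$ be a direct saddle. Because the eigenvalues of $Df(p)$ are positive, $f$ does not interchange the two branches of $W^s$ or of $W^u$, so each branch $\beta$ is $f$-invariant and is an injectively immersed copy of $[0,\infty)$. The key step is to show that its limit set $\mathcal L(\beta)$ is a single point of $\Fix(\tilde f)$: in $\mathbb S^2$ the closure of the ray $\beta$ is a continuum, so $\mathcal L(\beta)$ is a nonempty closed connected set, and every $y\in\mathcal L(\beta)$ is accumulated by $f$-images of the invariant ray $\beta$ and is therefore non-wandering, whence $y\in\Omega(\tilde f)=\Fix(\tilde f)$ by freeness. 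A connected subset of the totally disconnected set $\Fix(\tilde f)$ is a single point, so $\mathcal L(\beta)$ is one fixed point or $\infty$, and consequently $\beta$ is homeomorphic to $[0,\infty)$. Contacts of the types $\mathcal L(W^u)\cap\overline{W^s}$ and $\mathcal L(W^s)\cap\overline{W^u}$ are thus fixed points; contacts in $W^u\cap W^s\setminus\{p\}$ are excluded because a genuine homoclinic point is non-wandering and non-fixed, contradicting freeness. Finally, a contact equal to $p$ would force $\mathcal L(\beta)=\{p\}$, i.e.\ a branch returning to $p$ and forming a homoclinic loop; such a loop bounds a disc and, after perturbing its boundary off $\Fix(f)$, encloses an index-$1$ configuration, contradicting the index hypothesis. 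Hence every contact is a fixed point different from $p$.

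Part (iii) is then immediate: if $p$ is the only fixed point, $\Fix(\tilde f)=\{p,\infty\}$, so each $\mathcal L(\beta)$ equals $\{p\}$ or $\{\infty\}$; the homoclinic-loop argument rules out $\{p\}$, leaving $\mathcal L(\beta)=\{\infty\}$, so every branch of $W^s(p)$ and $W^u(p)$ is unbounded, and by (ii) there are no homoclinic contacts. I expect the main obstacle to lie in part (ii): rigorously identifying the curve-theoretic limit set $\mathcal L(\beta)$ with dynamical non-wandering behaviour (so that freeness can be invoked), establishing connectedness of $\mathcal L(\beta)$ in $\mathbb S^2$, and carrying out the index computation that excludes a homoclinic loop through the fixed point $p$ itself, where index additivity must be applied to a curve carefully chosen to avoid $p$.
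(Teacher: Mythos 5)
First, a point of order: the paper does not prove this statement at all. It is quoted verbatim from Hirsch's article \cite{sillaHirsch} and used as a black box (``the final ingredient''), so there is no internal proof to compare yours against; what follows is an assessment of your reconstruction on its own merits. Part (i) of your argument is sound and is indeed the standard route available from the paper's toolkit: isolated fixed points of index $\leq 0$ plus additivity give $\ind(f,\widehat{\Psi})\leq 0\neq 1$ for every Jordan curve avoiding $\Fix(f)$, hence freeness by Theorem~\ref{teoFreeHom}, and since $\Fix(\tilde f)=\Fix(f)\cup\{\infty\}$ is countable, hence totally disconnected, Lemma~\ref{freetridynHom}(c) yields convergence of every orbit to a single fixed point or to $\infty$.

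Parts (ii) and (iii), however, rest on a step that does not hold up: the claim that every $y\in\mathcal L(\beta)$ is non-wandering ``because it is accumulated by $f$-images of the invariant ray.'' Lying in the closure of an invariant ray does not make a point non-wandering: $\mathcal L(\beta)=\bigcap_t\overline{\zeta([t,\infty))}$ is in general strictly larger than $\bigcup_{x\in\beta}\omega(x)$, and a neighbourhood of $y$ meeting the ray at arbitrarily large parameters gives no point of that neighbourhood whose own forward orbit returns to it. (Think of a branch accumulating on a chain of fixed points joined by connecting orbits; the points of the connecting orbits are wandering, yet they lie in $\mathcal L(\beta)$. Note that Hirsch's conclusion (ii) constrains only the homoclinic contacts, i.e.\ $\mathcal L(W^u)\cap\overline{W^s}$ and its companions --- not all of $\mathcal L(W^u)$ --- precisely because $\mathcal L(\beta)$ need not reduce to a single fixed point.) The same problem affects your dismissal of contacts in $W^u\cap W^s\setminus\{p\}$: a point $r$ with $\alpha(r)=\omega(r)=\{p\}$ is perfectly compatible with trivial dynamics, and ``a genuine homoclinic point is non-wandering'' requires transversality or an inclination-lemma argument you do not have for a merely topological crossing. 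The actual content of Hirsch's theorem --- and the part your proposal only gestures at in its final sentences --- is the index argument: a homoclinic contact of a direct saddle that is not a fixed point produces a Jordan curve of index $1$ (built from arcs of $W^s$ and $W^u$, perturbed off the fixed-point set), contradicting the hypothesis. That construction, not freeness alone, is the engine behind (ii) and (iii), so the core of the proof is missing.
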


\section{Topological global saddle}\label{secGlobalSaddle}

We start with an immediate application of Theorem~\ref{teoHirsch}.

\begin{corollary}\label{coroHirsch}
Let $f\in \Diff (\R^2)$ be such that $\Fix(f)=\{0\}$ and $0$ is a local direct saddle.
Then $0$ is a global saddle.
\end{corollary}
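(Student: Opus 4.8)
The plan is to reduce everything to Hirsch's Theorem~\ref{teoHirsch}, so the first task is to check that its hypotheses are met. The theorem requires $f \in \Diff^{+}(\R^2)$, whereas the statement only assumes $f \in \Diff(\R^2)$; the point is that the direct saddle condition already forces orientation preservation. Indeed, since $f$ is a $C^1$ diffeomorphism, applying the chain rule to $f^{-1}\circ f = \mathrm{id}$ shows that $Df(x)$ is invertible for every $x$, so $\det Df$ never vanishes and, by continuity on the connected set $\R^2$, has constant sign. At the direct saddle we have $\det Df(0) = \lambda\mu > 0$ because both eigenvalues are positive, hence $\det Df > 0$ everywhere and $f \in \Diff^{+}(\R^2)$.

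Next I would verify the index and isolation conditions. The unique fixed point $0$ is trivially isolated, so only the index remains. As $0$ is hyperbolic, $1$ is not an eigenvalue of $Df(0)$, and the fixed point index equals the sign of $\det(I - Df(0))$. Since $I - Df(0)$ has eigenvalues $1-\lambda > 0$ and $1-\mu < 0$, this determinant is negative and $\ind(f,0) = -1 \leq 0$. Thus the unique fixed point is isolated with index $\leq 0$, and Theorem~\ref{teoHirsch} applies.

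It then remains to read off the three requirements in Definition~\ref{def:global_saddle}. Part (iii) of the theorem, applicable precisely because the only fixed point is a direct saddle, immediately gives that there are no homoclinic contacts and that every branch of $W^s(0,f)$ and $W^u(0,f)$ is unbounded, whence $W^s$ and $W^u$ are unbounded. For the escape condition, take $p \notin W^s \cup W^u \cup \{0\}$ and apply part (i): as $n \to +\infty$, either $f^n(p)$ tends to a fixed point---which, since $0$ is the only one, would place $p$ in $W^s$---or $\norm{f^n(p)} \to \infty$; the former is excluded, so $\norm{f^n(p)} \to \infty$. Running the same dichotomy as $n \to -\infty$ and recalling that $W^u = W^s(0,f^{-1})$ yields $\norm{f^{-n}(p)} \to \infty$. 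This exhausts Definition~\ref{def:global_saddle}, so $0$ is a global saddle.

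The argument is essentially bookkeeping once Hirsch's theorem is in hand, so I expect no serious obstacle; the only points demanding care are the reduction from $\Diff(\R^2)$ to $\Diff^{+}(\R^2)$ and the sign of the index, and these are exactly where the \emph{direct} (rather than twisted) nature of the saddle is genuinely used. For a twisted saddle one has either $\det Df(0) = \lambda\mu < 0$, so $f \notin \Diff^{+}(\R^2)$, or both eigenvalues negative, giving $\ind(f,0) = +1$; in either case Theorem~\ref{teoHirsch} no longer applies and a different approach is required, which is what motivates the remainder of the paper.
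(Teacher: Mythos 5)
Your proof is correct and follows essentially the same route as the paper: verify that the direct-saddle hypothesis forces $f\in\Diff^{+}(\R^2)$ and $\ind(f,0)=-1\le 0$, then invoke Theorem~\ref{teoHirsch} to read off the conditions of Definition~\ref{def:global_saddle}. The paper's proof is just a terser version of yours (citing parts i) and ii) where you use i) and iii), an immaterial difference), and your added details on orientation preservation and the index computation are accurate.
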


\begin{proof}
Since $0$ is a local direct saddle, then $f$ preserves orientation and $0$ has negative index.
By i) and ii) in Theorem~\ref{teoHirsch}, a point $p$ not in $W^s(0,f)\cup W^u(0,f)\cup \{0\}$ is such that $\omega(p)=\alpha(p)=\infty$ and $W^s(0,f)$ and $W^u(0,f)$ have no homoclinic contact and are unbounded.
\end{proof}

\begin{proposition} Let $f\in \Hom(\R^2)$ be such that $f^2$ has trivial dynamics. If $\Fix(f)$ is a discrete set and $\Fix(f^2)=\Fix(f)$, then $f$ has trivial dynamics.
\end{proposition}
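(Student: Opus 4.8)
The plan is to pass to the one-point compactification $\R^2\cup\{\infty\}\cong\mathbb{S}^2$, with the extension $\tilde f$ fixing $\infty$, and to transfer triviality of the dynamics from $\widetilde{f^2}=\tilde f^{\,2}$ to $\tilde f$. The one identity I would isolate at the outset is, using $\Fix(f^2)=\Fix(f)$,
$$\Fix(\widetilde{f^2})=\Fix(f^2)\cup\{\infty\}=\Fix(f)\cup\{\infty\}=\Fix(\tilde f).$$
Thus trivial dynamics of $f^2$ reads exactly $\omega_2(p),\alpha_2(p)\subset\Fix(\tilde f)$ for every $p\in\R^2$.

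The core step is a forward/backward comparison of the orbits of $\tilde f$ and $\tilde f^{\,2}$. Splitting an orbit into its even and odd iterates gives, in the compact space $\mathbb{S}^2$,
$$\omega(p,\tilde f)=\omega_2(p)\cup\omega_2(f(p)),$$
since the even iterates accumulate on $\omega_2(p)$ while the odd iterates $\tilde f^{\,2n+1}(p)=\tilde f^{\,2n}(f(p))$ accumulate on $\omega_2(f(p))$. By the previous paragraph both sets on the right lie in $\Fix(\tilde f)$, hence $\omega(p,\tilde f)\subset\Fix(\tilde f)$; applying the same to $\tilde f^{-1}$ (for which $\Fix(f^{-1})=\Fix(f)$ and $\Fix(f^{-2})=\Fix(f^2)=\Fix(f)$, so $f^{-2}$ again has trivial dynamics) gives $\alpha(p,\tilde f)\subset\Fix(\tilde f)$. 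As $p$ is arbitrary, $f$ has trivial dynamics.

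If one prefers to invoke Lemma~\ref{lemOmega2} directly, this is where the discreteness hypothesis enters: I would first show each $\omega_2(p)$ is a single point. Indeed $\omega_2(p)\subset\Fix(\tilde f^{\,2})$ forces consecutive $\tilde f^{\,2}$-iterates to coalesce (if $\tilde f^{\,2n_k}(p)\to a$ and $\tilde f^{\,2(n_k+1)}(p)\to b$ then $b=\tilde f^{\,2}(a)=a$), so the limit set $\omega_2(p)$ is connected; since $\Fix(f)$ is closed and discrete the set $\Fix(\tilde f)$ is totally disconnected, and a connected subset of it is a point, necessarily a fixed point $q$ or $\infty$. Lemma~\ref{lemOmega2}, after translating $q$ to the origin, then upgrades $\omega_2(p)=\{q\}$ to $\omega(p)=\{q\}$ and $\omega_2(p)=\infty$ to $\omega(p)=\infty$. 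The step I expect to be delicate is not any single estimate but the bookkeeping behind the displayed identity $\Fix(\widetilde{f^2})=\Fix(\tilde f)$: the hypothesis $\Fix(f^2)=\Fix(f)$ is indispensable, for without it a point could converge to a genuine $2$-cycle of $f$, producing non-trivial dynamics for $f$ while $f^2$ stays trivial; discreteness, by contrast, is needed only for the Lemma~\ref{lemOmega2} route, through the total disconnectedness of the fixed-point set.
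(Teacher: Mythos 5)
Your proposal is correct, and your primary argument takes a genuinely different route from the paper's. The paper's proof is essentially your second route: it applies Lemma~\ref{lemOmega2} to upgrade $\omega_2(p)=\{q\}$ to $\omega(p)=\{q\}$ and $\omega_2(p)=\infty$ to $\omega(p)=\infty$, but it does so while tacitly assuming that $\omega_2(p)$ is either a single fixed point or $\infty$; you supply the missing justification, namely that a limit set consisting of fixed points of $\tilde f^{\,2}$ is connected, while discreteness of $\Fix(f)$ makes $\Fix(\tilde f)$ totally disconnected, so the limit set is a singleton. Your first route --- splitting the orbit into even and odd iterates to get $\omega(p,\tilde f)=\omega_2(p)\cup\omega_2(f(p))\subset\Fix(\widetilde{f^2})=\Fix(\tilde f)$, and likewise for $\alpha$ via $f^{-1}$ --- bypasses Lemma~\ref{lemOmega2} entirely and, as you observe, never uses discreteness of $\Fix(f)$: it proves the statement as literally phrased (containment of limit sets in $\Fix(\tilde f)$) under weaker hypotheses, at the cost of compactness bookkeeping on $\mathbb{S}^2$ (accumulation points of an interleaved sequence form the union of those of its two subsequences). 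What the paper's route buys in exchange is the stronger conclusion that each $\omega(p)$ and $\alpha(p)$ is a single fixed point or $\infty$, which is the form the paper exploits when it identifies $W^s$ and $W^u$ with level sets of $\omega$ and $\alpha$. Your diagnosis of the roles of the two hypotheses --- $\Fix(f^2)=\Fix(f)$ ruling out convergence to genuine $2$-cycles, discreteness serving only the singleton refinement --- is accurate.
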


\begin{proof} Given $p\in \R^2$, since $f^2$ has trivial dynamics, then both $\omega_2(p)$ and
$\alpha_2(p)\subset  \Fix(f^2)\cup \{\infty\}=\Fix(f)\cup \{\infty\}$.
Suppose there exists a fixed point $q$ of $f$,  such that $\omega_2(p)=q$.
Then, as in Lemma \ref{lemOmega2}, $\omega(p)=q$. Suppose now that $\omega_2(p)=\infty$, then as in Lemma \ref{lemOmega2},  $\omega(p)=\infty$. Considering $f^{-1}$ we can prove in the same way that $\alpha(p)\subset \Fix(f)\cup \{\infty\}$.
\end{proof}

\begin{lemma} \label{propSaddleFree}
Let $f\in \Hom^{+}(\R^2)$ be of class $C^1$. If the origin is the unique fixed
point of $f$ and it is a local  direct saddle, then $f$ is free.
\end{lemma}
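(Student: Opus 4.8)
The plan is to deduce freeness directly from Brown's Theorem~\ref{teoFreeHom}. Since $f\in\Hom^{+}(\R^2)$, orientation is already preserved, so the only thing to verify is the index condition: that $\ind(f,\widehat{\Psi})\neq 1$ for every Jordan curve $\Psi\subset\R^2\setminus\Fix(f)$. Because $\Fix(f)=\{0\}$, every such curve either encloses the origin or it does not, and in each case the index of the bounded component $\widehat{\Psi}$ reduces to a single computation, namely the fixed point index of $0$.

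First I would compute $\ind(f,0)$. As $0$ is a local direct saddle, $Df(0)$ has real eigenvalues $\lambda,\mu$ with $0<\lambda<1<\mu$. Hence $I-Df(0)$ is invertible, with $\det\bigl(I-Df(0)\bigr)=(1-\lambda)(1-\mu)$; since $1-\lambda>0$ and $1-\mu<0$, this determinant is negative. The standard linearisation formula for the index of a $C^1$ map at a nondegenerate fixed point then gives $\ind(f,0)=\operatorname{sgn}\det\bigl(I-Df(0)\bigr)=-1$.

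Next, for an arbitrary Jordan curve $\Psi\subset\R^2\setminus\{0\}$ with bounded component $\widehat{\Psi}$, there are two cases. If $0\notin\widehat{\Psi}$, then $\widehat{\Psi}$ contains no fixed point and $\ind(f,\widehat{\Psi})=0$. If $0\in\widehat{\Psi}$, then, the origin being the only fixed point, $\ind(f,\widehat{\Psi})=\ind(f,0)=-1$. In either case $\ind(f,\widehat{\Psi})\in\{0,-1\}$, so in particular $\ind(f,\widehat{\Psi})\neq 1$. The hypotheses of Theorem~\ref{teoFreeHom} are therefore met, and it follows at once that $f$ is free.

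The only delicate point is the justification of the identity $\ind(f,0)=\operatorname{sgn}\det\bigl(I-Df(0)\bigr)$ for a $C^1$ homeomorphism rather than a smooth diffeomorphism. I would obtain it from the homotopy invariance and excision properties of the Brouwer degree: on a small circle about the origin the map $I-f$ is homotopic to its linear part $I-Df(0)$ through maps that never vanish, precisely because $I-Df(0)$ is invertible, and the degree of an invertible linear map $I-Df(0)$ on such a circle equals the sign of its determinant. I expect this to be the main—albeit routine—obstacle; the remaining steps are immediate bookkeeping once the index of the direct saddle is pinned down as $-1$.
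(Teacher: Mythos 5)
Your proposal is correct and follows essentially the same route as the paper: compute $\ind(f,\widehat{\Psi})\in\{0,-1\}$ for every Jordan curve $\Psi$ avoiding the unique fixed point and invoke Theorem~\ref{teoFreeHom}. The only difference is that you spell out the standard computation $\ind(f,0)=\operatorname{sgn}\det\bigl(I-Df(0)\bigr)=\operatorname{sgn}\bigl((1-\lambda)(1-\mu)\bigr)=-1$, which the paper simply asserts.
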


\begin{proof}
Let $\Psi$ be a Jordan curve such that $\Psi\cap \Fix(f)=\emptyset$. Thus, $\ind (f,\hat{\Psi})=-1$ if $0\in \hat{\Psi}$, where $\hat{\Psi}$ is the bounded connected component of $\R^2\setminus \Psi$. Moreover, if $0\notin \hat{\Psi}$, then
$\ind (f,\hat{\Psi})=0$  because $0$ is the unique fixed point. So
$f$ is free by Theorem \ref{teoFreeHom}.
\end{proof}

Recall that the uniqueness of a local direct saddle may be obtained from Lemma~\ref{corPtoFijo}.
Using this lemma and Lemma~\ref{propSaddleFree} we obtain the following:

\begin{proposition} \label{corolSaddleFree}
Let $f\in \Hom^{+}(\R^2)$ be of class $C^1$. Suppose that $0$ is a fixed point of $f$ which is a local  direct saddle.
If $\spec(f)\cap [1,1+\varepsilon[=\emptyset$, then $f$ is free.
\end{proposition}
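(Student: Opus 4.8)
The plan is to deduce the result directly from the two preceding lemmas, exactly as the sentence immediately before the statement suggests. First I would use the spectral hypothesis to pin down the fixed-point set. Since $f$ is of class $C^1$ it is in particular differentiable, and the assumption $\spec(f)\cap[1,1+\varepsilon[=\emptyset$ is precisely the condition needed to apply Lemma~\ref{corPtoFijo}. That lemma then tells us that $f$ has at most one fixed point. Because $0$ is assumed to be a fixed point, this forces $0$ to be the \emph{unique} fixed point of $f$.

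With uniqueness in hand, the second step is to verify that all the hypotheses of Lemma~\ref{propSaddleFree} are now satisfied: $f\in\Hom^+(\R^2)$ is of class $C^1$, the origin is the unique fixed point, and by assumption it is a local direct saddle. Applying Lemma~\ref{propSaddleFree} then yields immediately that $f$ is free, which is the desired conclusion.

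There is no genuine obstacle here, as the proposition is essentially a bookkeeping combination of the two lemmas; the only point worth a moment's care is the compatibility of the spectral condition with the direct-saddle assumption. A local direct saddle has $Df(0)$-eigenvalues $0<\lambda<1<\mu$, so the derivative spectrum at the origin already avoids a right-neighbourhood of $1$. The global hypothesis $\spec(f)\cap[1,1+\varepsilon[=\emptyset$ merely extends this avoidance to every point of the plane, which is exactly what is required to rule out a second fixed point via Lemma~\ref{corPtoFijo}; thus the two assumptions are consistent and no further computation is needed.
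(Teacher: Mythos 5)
Your proof is correct and follows exactly the route the paper intends: Lemma~\ref{corPtoFijo} gives uniqueness of the fixed point from the spectral hypothesis, and Lemma~\ref{propSaddleFree} then yields freeness. The paper states the proposition without a written proof precisely because it is this immediate combination of the two lemmas.
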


%
%

\begin{corollary}\label{cor2Hirsch}
 Let $f\in \Diff(\R^2)$ such that $f(0)=0$ and for some $\varepsilon>0$, $\spec(f)\cap [1,1+\varepsilon[=\emptyset$. If the origin is a local direct saddle, then it is a global saddle.
%
\end{corollary}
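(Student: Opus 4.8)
The plan is to reduce the statement to Corollary~\ref{coroHirsch}, which already settles the case of a unique local direct saddle. The only discrepancy between the two statements is that Corollary~\ref{coroHirsch} assumes $\Fix(f)=\{0\}$ outright, whereas here I am given only that $0$ is one fixed point, together with the spectral condition $\spec(f)\cap[1,1+\varepsilon[=\emptyset$. So the first---and essentially only---task is to promote the spectral hypothesis into uniqueness of the fixed point.

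First I would invoke Lemma~\ref{corPtoFijo}: the assumption $\spec(f)\cap[1,1+\varepsilon[=\emptyset$ guarantees that $f$ has at most one fixed point. Since $f(0)=0$ by hypothesis, this immediately forces $\Fix(f)=\{0\}$. I would then feed this into Corollary~\ref{coroHirsch}, whose hypotheses are precisely $f\in\Diff(\R^2)$, $\Fix(f)=\{0\}$, and $0$ a local direct saddle---all now in hand---and whose conclusion is exactly that $0$ is a global saddle in the sense of Definition~\ref{def:global_saddle}, i.e.\ no homoclinic contacts, unbounded stable and unstable curves, and $\omega(p)=\alpha(p)=\infty$ for every $p\notin W^s(0,f)\cup W^u(0,f)\cup\{0\}$. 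I would note in passing that the orientation-preserving property needed behind the scenes (Theorem~\ref{teoHirsch} is stated for $\Diff^{+}$) is automatic for a direct saddle: one has $\det Df(0)=\lambda\mu>0$, and since $f$ is a $C^1$ diffeomorphism the sign of $\det Df$ is constant on $\R^2$, whence $f\in\Diff^{+}(\R^2)$.

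I do not expect a genuine obstacle here; the corollary is a clean composition of two earlier results, and the only point requiring care is verifying that each hypothesis of the intermediate statements is met---in particular that the \emph{direct} saddle assumption is what licenses Hirsch's orientation-preserving framework through Corollary~\ref{coroHirsch}. One could instead try to route through freeness, using Proposition~\ref{corolSaddleFree} to obtain that $f$ is free and then Proposition~\ref{freetridyn} to get trivial dynamics; however, that path only yields $\omega(p),\alpha(p)\in\{0,\infty\}$ and would still require extra work both to exclude homoclinic contacts and to show that points off the invariant manifolds actually escape to infinity rather than converge to $0$. Going directly through Corollary~\ref{coroHirsch}, which already packages all of these conclusions, is therefore the more economical route.
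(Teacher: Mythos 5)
Your proposal is correct and follows exactly the paper's own route: the authors' proof is precisely ``Follows by Lemma~\ref{corPtoFijo} and Corollary~\ref{coroHirsch}.'' Your additional remark that a direct saddle forces $\det Df(0)=\lambda\mu>0$ and hence orientation preservation is the same observation the paper makes inside the proof of Corollary~\ref{coroHirsch}, so nothing is missing.
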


\begin{proof} Follows by Lemma \ref{corPtoFijo} and Corollary~\ref{coroHirsch}.
\end{proof}

\begin{theorem} \label{teoSaddle} Let
$f\in \Diff(\R^2)$ be such that $\Fix(f^2)=\Fix(f)=\{0\}$ and 0 is a local saddle.
Then  $0$ is a global saddle.
\end{theorem}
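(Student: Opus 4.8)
The plan is to reduce to the direct-saddle case already settled in Corollary~\ref{coroHirsch}, applied to the square $g=f^2$, and then to push each clause of Definition~\ref{def:global_saddle} back down from $g$ to $f$. First I would observe that, since $Df(0)$ has eigenvalues $\lambda,\mu$ with $0<|\lambda|<1<|\mu|$, the derivative $Dg(0)=Df(0)^2$ has eigenvalues $\lambda^2,\mu^2$ with $0<\lambda^2<1<\mu^2$; thus $0$ is a local \emph{direct} saddle for $g$. Since $g\in\Diff(\R^2)$ and $\Fix(g)=\Fix(f^2)=\{0\}$ by hypothesis, Corollary~\ref{coroHirsch} applies and gives that $0$ is a global saddle for $g=f^2$.

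Next I would identify the invariant curves of $f$ and $f^2$, showing $W^s(0,f)=W^s(0,f^2)$ and $W^u(0,f)=W^u(0,f^2)$ as subsets of $\R^2$. The inclusion $W^s(0,f)\subseteq W^s(0,f^2)$ is immediate by passing to the even subsequence, while the reverse inclusion is part a) of Lemma~\ref{lemOmega2} (equivalently, continuity of $f$ forces the odd iterates $f^{2n+1}(x)=f(f^{2n}(x))\to 0$ whenever $f^{2n}(x)\to 0$). Applying this to $f^{-1}$ gives the corresponding identity for the unstable curves. Because $f^2$ is a global saddle its stable and unstable curves are unbounded, so the equal sets $W^s(0,f)$ and $W^u(0,f)$ are unbounded too.

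For the behaviour off the curves, I would take any $p\notin W^s(0,f)\cup W^u(0,f)\cup\{0\}$; by the previous step this is the same as lying outside $W^s(0,f^2)\cup W^u(0,f^2)\cup\{0\}$, so the global-saddle property of $f^2$ yields $\norm{f^{2n}(p)}\to\infty$ and $\norm{f^{-2n}(p)}\to\infty$, that is, $\omega_2(p)=\alpha_2(p)=\infty$. Part b) of Lemma~\ref{lemOmega2}, used for $f$ and then for $f^{-1}$, upgrades these to $\omega(p)=\infty$ and $\alpha(p)=\infty$, i.e.\ $\norm{f^{n}(p)}\to\infty$ and $\norm{f^{-n}(p)}\to\infty$, which is exactly the escape condition demanded of a global saddle for $f$.

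It remains to rule out homoclinic contacts. The homoclinic-contact set is assembled from $W^u\cap W^s$, the closures $\overline{W^s},\overline{W^u}$, and the limit sets $\mathcal{L}(W^s),\mathcal{L}(W^u)$. Each of these depends only on $W^s$ and $W^u$ as subsets of $\R^2$ together with the base point $0$: the splitting into branches is intrinsic to the curve, and the limit set of a branch is the accumulation set of its non-compact end, independent of parametrisation and taken as a union over the two branches, so the possible swapping of branches for a twisted saddle is irrelevant. Since these subsets coincide for $f$ and $f^2$ by the second step, the homoclinic-contact set of $f$ equals that of $f^2$, which is empty. This last identification is the step needing the most care --- that the limit sets, and hence the homoclinic contacts, are genuinely intrinsic to the curves and therefore unchanged on replacing $f$ by $f^2$; the remaining ingredients are direct appeals to Corollary~\ref{coroHirsch} and Lemma~\ref{lemOmega2}.
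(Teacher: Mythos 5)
Your proposal is correct and follows essentially the same route as the paper: reduce to $f^2$, which has a direct saddle and unique fixed point so that Corollary~\ref{coroHirsch} applies, then transfer the conclusion back to $f$ via Lemma~\ref{lemOmega2} and the identification $W^{s}(0,f)=W^{s}(0,f^2)$, $W^{u}(0,f)=W^{u}(0,f^2)$. The paper's own proof is terser; your extra care in checking that the homoclinic-contact set is intrinsic to the curves (and hence empty for $f$ because it is empty for $f^2$) is exactly the point the authors spell out only later, in case~b) of Theorem~\ref{teorayoS}.
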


\begin{proof}
Since $0$ is a local direct saddle for $f^2$ and $0$ is its unique fixed point, then 
$0$ is a global direct  saddle for $f^2$
by Corollary~\ref{coroHirsch}.
Moreover, by Lemma
\ref{lemOmega2}, $\omega(p)=\omega_2(p)$ and
$\alpha(p)=\alpha_2(p)$ for all $p\in \R^2$. So
$W^{s}(0,f)=W^{s}(0,f^2)$, $W^{u}(0,f)=W^{u}(0,f^2)$.
Then $0$ is a global  saddle for $f$.
\end{proof}

{  An example of a map satisfying the hypotheses of Theorem~\ref{teoSaddle} is given by
$f(x,y)=\left(\varphi(x),-2y\right)$, with $\varphi(x)=x\left(1+\arctan^2 x\right)/(4+\pi^2)$.
Note that  $\varphi(x)$ is odd, monotonic, $\lim_{x\to+\infty}\varphi(x)=+\infty$ and $0<\varphi(x)<x$ for $x>0$, implying that $0<\varphi^2(x)<\varphi(x)<x$. Hence, $\Fix(f^2)=\Fix(f)=\{0\}$.
}

The next example shows that Theorem \ref{teoSaddle} is false without the
hypothesis $\Fix(f^2)=\{0\}$ even when the map is orientation
preserving. That phenomenon appears when the saddle is not direct
because in this case the map interchanges the quadrants.

\begin{example} \label{exemploTwistedSaddle}
Consider the polynomial $p(x)=-ax^3+(a-1)x$ with $0<a<1$. Then the map $f: \R^2 \to \R^2$ given by $f(x,y)=(p(x),-2y)$ (with dynamics as in Figure \ref{figurasaddle}) verifies:
\begin{enumerate} \item $f\in \Diff^{+}(\R^2)$.   \item $\spec(f)\cap \R^{+}=\emptyset$.
\item $0$ is a twisted saddle.
 \item $\Fix(f)= \{0\}$.
 \item $\Fix(f^2)=\{-1,0,1\}\neq \{0\}$.
\item $\ind(f,\Psi)=+1$ or $0$.
\item $f$ is not free.
\end{enumerate}

\begin{figure}[hh]
\centering
\includegraphics[scale=0.5]{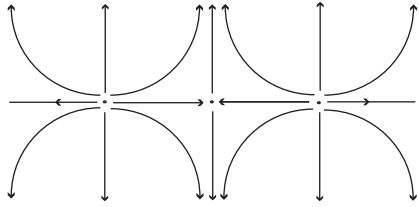}
\caption{A twisted saddle that is not free. The origin is a fixed point of $f$ and the points on the left and right are fixed points of $f^2$, symmetrically located around the origin.
\label{figurasaddle}}
\end{figure}
\end{example}

\section{Symmetric global saddle}\label{secSymm}

Let $\Gamma$ be a compact Lie group acting on $\R^2$, that is, a group which has the structure of a compact $C^{\infty}-$differentiable manifold such that the map $\Gamma \times \Gamma \to \Gamma$, $(x,y)\mapsto xy^{-1}$ is of class $C^{\infty}$. The following is taken from Golubitsky {\em et al.} \cite{golu2},  especially Chapter~XII,  to which we refer the reader interested in further detail.

Given a map $f:\R^2\longrightarrow\R^2$,
we say that $\gamma \in \Gamma$ is a \emph{symmetry} of $f$ if $f(\gamma x)=\gamma f(x)$.
We define the \emph{symmetry group} of $f$ as the biggest closed subset of $GL(2)$ containing all the symmetries of $f$. It will be denoted by $\Gamma_f$.

We say that $f:\R^2\to \R^2$ is  \emph{$\Gamma-$equivariant} or that $f$ {\em commutes} with $\Gamma$ if
$$
f(\gamma x)=\gamma f(x) \quad \text{ for all }\quad \gamma \in \Gamma.
$$
It follows that
every map $f:\R^2\to \R^2$ is equivariant under the action of its symmetry group, that is, $f$ is $\Gamma_f$-equivariant.

Let  $\Sigma$ be a subgroup of $\Gamma$. The {\em fixed-point subspace} of $\Sigma$ is
$$
\Fix (\Sigma) =\{p\in \R^2: \sigma p=p \; \text{ for all } \; \sigma \in \Sigma\}.
$$
If $\Sigma$ is generated by a single element $\sigma \in \Gamma$, we write $\Fix\langle\sigma\rangle$ instead of $\Fix (\Sigma)$.
We note that, for each subgroup $\Sigma$ of $\Gamma$, $\Fix (\Sigma)$ is invariant by the dynamics of a $\Gamma$-equivariant map (\cite{golu2}, XIII, Lemma 2.1).

For a group $\Gamma$ acting on $\R^2$ a non-trivial fixed point subspace arises when $\Gamma$ contains a reflection. By a linear change of coordinates we may take the reflection to be the {\em flip}
    $$
    \kappa \cdot (x,y) = (x, -y) .
    $$

%
%
%

In Alarc\'on {\em et al} \cite{SofisbeGlobal}, we provide a list of symmetry groups for which the corresponding equivariant maps may possess a local saddle.
There it is shown that the only symmetry groups
that admit a local saddle are $\Z_2(\langle-Id\rangle)$, $\Z_2(\langle\kappa\rangle)$ and $D_2=\Z_2^a\oplus\Z_2^b$. The  superscripts $a$ and $b$ indicate that the groups $\Z_2^a$ and $\Z_2^b$ are generated by two reflections, $a$ and $b$, on orthogonal lines.
Note that both $\Z_2(\langle\kappa\rangle)$ and $D_2=\Z_2^a\oplus\Z_2^b$ contain a reflection, 
and hence an $f$-invariant line.
A description of the admissible $\omega$-limit set of a point in some cases, when the symmetry group contains a flip is also given in \cite{SofisbeGlobal}.

The presence of at least one reflection in the symmetry group allows us to relax the hypotheses used in Section~\ref{secGlobalSaddle}
 in order to obtain a global saddle for $C^1$ homeomorphisms, not necessarily diffeomorphisms, and to extend the result to include twisted saddles. That happens because the hypotheses in
 Theorem~\ref{teoHirsch} are not necessary conditions.

\begin{theorem} \label{teorayoS}
Let $f\in \Hom(\R^ 2)$ be $C^1$ with symmetry group $\Gamma$ such that $\kappa \in \Gamma$,  the origin is a local saddle, and $\Fix(f)=\{0\}$. Suppose one of the following holds:
\begin{itemize}
\item[$a)$] $f$ is orientation preserving and $0$ is a local direct saddle;
\item[$b)$]  $\Fix(f^ 2)=\{0\}$.
\end{itemize}
Then the origin is a global saddle.

\end{theorem}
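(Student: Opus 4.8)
The plan is to exploit the presence of the flip $\kappa$ to reduce the dynamics to a half-plane where the orientation and index obstructions in Theorem~\ref{teoHirsch} can be sidestepped, and to treat the two cases $a)$ and $b)$ by leveraging the already-established results for $f^2$. In case $a)$, where $f$ is orientation preserving and $0$ is a local direct saddle with $\Fix(f)=\{0\}$, the conclusion is essentially immediate: the hypotheses are exactly those of Corollary~\ref{coroHirsch}, so $0$ is a global saddle and the symmetry is not even needed. I would dispatch this case in one line. The substance of the theorem is case $b)$, where $\Fix(f^2)=\{0\}$ but the saddle may be twisted and $f$ need not preserve orientation.

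For case $b)$, the natural first step is to pass to $f^2$. Since $0$ is a local saddle for $f$, it is a local \emph{direct} saddle for $f^2$ (noted explicitly in the background section), and $f^2\in\Diff(\R^2)$ with $\Fix(f^2)=\{0\}$. This is precisely the setting of Theorem~\ref{teoSaddle} (or directly Corollary~\ref{coroHirsch} applied to $f^2$, since $f^2$ is orientation preserving and its unique fixed point is a direct saddle), so $0$ is a global direct saddle for $f^2$. Thus for every $p\notin W^s(0,f^2)\cup W^u(0,f^2)\cup\{0\}$ we have $\omega_2(p)=\alpha_2(p)=\infty$, there are no homoclinic contacts for $f^2$, and the stable and unstable curves of $f^2$ are unbounded.

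The second step is to transfer these facts from $f^2$ to $f$. By Lemma~\ref{lemOmega2}, $\omega_2(p)=\infty$ forces $\omega(p)=\infty$ and likewise $\alpha_2(p)=\infty$ forces $\alpha(p)=\infty$; moreover the stable and unstable sets coincide, $W^s(0,f)=W^s(0,f^2)$ and $W^u(0,f)=W^u(0,f^2)$, exactly as in the proof of Theorem~\ref{teoSaddle}. Hence the invariant curves for $f$ are unbounded, every off-curve point escapes to infinity in both time directions, and the only remaining point is to rule out homoclinic contacts for $f$ itself. Since $W^s(0,f)=W^s(0,f^2)$ and $W^u(0,f)=W^u(0,f^2)$ as sets, and likewise the limit sets $\mathcal{L}(W^s)$, $\mathcal{L}(W^u)$ are determined by these curves and are unchanged under passing to a power, the homoclinic-contact set for $f$ equals that for $f^2$, which is empty. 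Therefore $0$ is a global saddle for $f$.

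The one place requiring genuine care is this last identification of homoclinic contacts: a priori the \emph{branches} of $W^s$ and $W^u$ may be permuted by $f$ even though the full curves are invariant, so I would verify that the absence of homoclinic contacts is a property of the curves and their limit sets as unordered objects (which is how the homoclinic-contact set is defined in the background section) and hence is inherited from $f^2$. The symmetry $\kappa\in\Gamma$ guarantees that the $f$-invariant line $\Fix\langle\kappa\rangle$ is preserved and organises the branches compatibly, but the core of the argument is really the reduction to $f^2$ together with Lemma~\ref{lemOmega2}; the flip is what licenses the weakening from diffeomorphism to $C^1$ homeomorphism and from direct to twisted in the broader context, rather than being essential to this particular deduction. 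I expect the branch-permutation bookkeeping to be the only subtle point, and it is resolved by observing that the definition of global saddle quantifies over all off-curve points and over the curves as sets, both of which are manifestly invariant under replacing $f$ by $f^2$.
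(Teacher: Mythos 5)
Your central move---dispatching case $a)$ via Corollary~\ref{coroHirsch}, and case $b)$ via Theorem~\ref{teoSaddle} or Corollary~\ref{coroHirsch} applied to $f^2$---is not available here, because those results require $f\in\Diff(\R^2)$, whereas Theorem~\ref{teorayoS} only assumes $f$ is a $C^1$ homeomorphism: its inverse need not be $C^1$ and $Df$ may be singular away from the origin, so neither $f$ nor $f^2$ is known to satisfy the hypotheses of Hirsch's Theorem~\ref{teoHirsch}. This is precisely the point of the theorem; the paper states just before it that the reflection is what ``allows us to relax the diffeomorphism hypothesis'' of Section~\ref{secGlobalSaddle}. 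If $f$ were a diffeomorphism, case $a)$ would be Corollary~\ref{coroHirsch} verbatim and the statement would be redundant. Your closing remark that ``the symmetry is not even needed'' in case $a)$ and that the flip is ``not essential to this particular deduction'' is therefore exactly backwards: the flip is what carries the proof.

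The paper's argument for case $a)$ avoids Hirsch entirely. The symmetry forces one of the global curves, say $W^s(0,f)$, to lie in the invariant line $\Fix\langle\kappa\rangle$. Freeness of $f$ comes from Lemma~\ref{propSaddleFree}, which needs only $f\in\Hom^{+}(\R^2)$ of class $C^1$ (it rests on Brown's Theorem~\ref{teoFreeHom} and the index count at the unique fixed point), and freeness gives trivial dynamics, hence unboundedness of the curves and $\omega(p)=\alpha(p)=\infty$ off them. The absence of homoclinic contacts---which in your outline is simply inherited from Hirsch---must instead be proved directly: using $W^s=\Fix\langle\kappa\rangle$ one gets $\overline{W^s}=\mathcal{L}(W^s)=W^s\cup\{\infty\}$ and $W^u\cap W^s=\{0\}$ by injectivity, and a putative point $r\in\mathcal{L}(W^u)\cap W^s$ is excluded by producing near $r$ a disk $V$ with $f^n(V)\cap V\neq\emptyset$ but $f^{\ell}(V)\cap V=\emptyset$ for $0<\ell<n$, so that Murthy's theorem yields a second fixed point, contradicting $\Fix(f)=\{0\}$. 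Your reduction of case $b)$ to case $a)$ for $f^2$, with the transfer of limits via Lemma~\ref{lemOmega2} and the identification of homoclinic contacts of $f$ and $f^2$ through the equality of the invariant curves, matches the paper's; but it only becomes a proof once case $a)$ has been established for $C^1$ homeomorphisms, which your argument does not do.
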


\begin{proof}
From the symmetry it follows that one of the global curves $W^s(0,f)$ or $W^u(0,f)$ is contained in
$\Fix(\kappa)$.
Without loss of generality, let  $W^s(0,f)\subset\Fix(\kappa)$.

Case a): 
It follows from Lemma~\ref{propSaddleFree} that $f$ is free and therefore has trivial dynamics.
Hence, $\omega(p)=\{0\}$ if and only if $p\in W^s(0,f)$, otherwise $\omega(p)=\infty$. Analogously, $\alpha(p)=\{0\}$ if and only if $p\in W^u(0,f)$, otherwise $\alpha(p)=\infty$.
This holds, in particular, for $p\ne 0$, in $W^u(0,f)$ and therefore $W^u(0,f)$ is unbounded, since $\omega(p)=\infty$.

An adaptation of Proposition~3.4 in \cite{SofisbeGlobal} shows the absence of homoclinic contacts, as follows:
let $r\ne 0, \infty$ be a homoclinic contact, that is 
$$
 r\in (W^u \cap W^s \setminus \{0\}) \cup (\mathcal{L}(W^u) \cap \overline{W^s}) \cup (\mathcal{L}(W^s) \cap \overline{W^u}).
 $$
Because $W^s=\Fix \langle\kappa\rangle$, we have:
$$
 \overline{W^s}=\mathcal{L}(W^s) =W^s\cup \{\infty\}.
$$

Because $f\in\Hom(\R^2)$ and $W^s=\Fix \langle\kappa\rangle$, we have $W^u\cap W^s=\{0\}$.

Assume $r\in \mathcal{L}(W^u) \cap {W^s}$. Then 
$$
\exists q_j\in W^u\qquad
\exists n_j\to +\infty\qquad
s.t. \quad f^{n_j}(q_j)\to r.
$$

Let $K=B_\varepsilon(r)$, $\varepsilon>0$ such that $0\not\in K$.
Then $\Fix \langle\kappa\rangle \cap K$ is an embedded
segment and $K\setminus\Fix \langle\kappa\rangle$ is the union of two disjoint disks
$W_1$ and $W_2$ homeomorphic to $\R^2$. 
Suppose without loss of generality that $f^{n_j}(q_j)\in W_1$.
Since $r$ is not a fixed point, taking $\varepsilon$ sufficiently small, there exists an open disk $V\subset W_1$  and a positive integer $n$, with $n\ge 2$,
such that for some $s\in V,$ we have that $f^n(s)\in V$, while
$V\cap f^{\ell}(V)=\emptyset $, for $\ell=1,2,\dots, n-1$. Then, by Theorem 3.3 in \cite{Murthy}, $f$ has a fixed point in $V$ which contradicts the uniqueness of the fixed point. So  $r\ne\infty$ is not a homoclinic contact.
\bigbreak

Case b):
{  Note that $r$ is a homoclinic contact for $f$ if and only if  $r$ is a homoclinic contact for $f^2$, because the invariant manifolds $W^s$ and $W^u$ for $f^2$ coincide with those for $f$.The result follows  from this}
and because $f^2$ satisfies the conditions of case a).
Recall that, by Lemma~\ref{lemOmega2}, and since $f^2$ has trivial dynamics, the $\omega$-limits of $f$ and $f^2$ coincide.
%
\end{proof}

{  Any one of the cases of Theorem~\ref{teorayoS} can be applied to show that 
$$
f(x,y)=\left(x\left(1+\arctan^2 x\right)/(4+\pi^2),2y \right)
$$ 
has a global saddle at the origin.
}

For  $D_2$-equivariant maps, additional constraints arise naturally, and we obtain the remaining results.

\begin{lemma}\label{lemaWs}
Let $f\in \Hom(\R^ 2)$ be $C^1$ with symmetry group $D_2$. Suppose that $\Fix(f)=\{0\}$ and $0$ is a local saddle.
If one of the following holds:
\begin{enumerate}
\item[$a)$]
$0$ is a direct saddle;
\item[$b)$]
$\Fix(f^2)\cap \Fix(\Z_2^a)=\{0\}$ and $\Fix(f^2)\cap \Fix(\Z_2^b)=\{0\}$;
\end{enumerate}
then $W^{s}(0,f)=\Fix(\Z_2^j)$ and $W^{u}(0,f)=\Fix(\Z_2^i)$ for  $i\ne j\in\{a,b\}$.
\end{lemma}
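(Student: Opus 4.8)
The plan is to exploit the $D_2$-equivariance to fix the linear geometry at the origin, and then to prove the two inclusions $\Fix(\Z_2^a)\subseteq W^s(0,f)$ and $W^s(0,f)\subseteq\Fix(\Z_2^a)$ separately (with the mirror statements for $W^u$). First I would record what the symmetry forces on $Df(0)$. Since $f(0)=0$ and $f$ commutes with $D_2$, the derivative $Df(0)$ commutes with the $D_2$-action. After a linear change of coordinates placing the two orthogonal reflection axes on the coordinate axes, the generating reflections act as $\operatorname{diag}(1,-1)$ and $\operatorname{diag}(-1,1)$, and a matrix commuting with both is diagonal; hence $Df(0)=\operatorname{diag}(\lambda,\mu)$ and its eigenspaces are exactly $A:=\Fix(\Z_2^a)$ and $B:=\Fix(\Z_2^b)$. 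As $0$ is a local saddle, $|\lambda|\neq|\mu|$, so exactly one axis is the stable eigenspace; relabelling the two reflections I may assume $|\lambda|<1<|\mu|$ with $\lambda$ the eigenvalue along $A$ and $\mu$ along $B$. Both axes are fixed-point subspaces, hence invariant under $f$ and, because $f^{-1}$ is also $D_2$-equivariant, under $f^{-1}$; thus $f$ restricts to a $C^1$ homeomorphism of each axis.

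Next I would obtain the forward inclusion $A\subseteq W^s(0,f)$ by one-dimensional dynamics on the invariant axis $A\cong\R$. In case $a)$ the restriction $g=f|_A$ is a homeomorphism of $\R$ with $g'(0)=\lambda\in(0,1)$, so it is increasing, and $\Fix(f)=\{0\}$ makes $0$ its only fixed point. For $x>0$ one has $g(x)<x$ near $0^+$ and no fixed point on $(0,\infty)$, so $g(x)<x$ throughout, whence $\{g^n(x)\}$ decreases to a fixed point, necessarily $0$; the same holds on $(-\infty,0)$, giving $A\subseteq W^s(0,f)$. In case $b)$ I run the identical argument for $g=f^2|_A$: the hypothesis $\Fix(f^2)\cap A=\{0\}$ makes $0$ its unique fixed point and $g'(0)=\lambda^2\in(0,1)$ makes $0$ attracting, so $f^{2n}(x)\to 0$ for every $x\in A$, and Lemma~\ref{lemOmega2} upgrades this to $f^n(x)\to 0$. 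Applying the same reasoning to $f^{-1}$ (respectively $f^{-2}$) on $B$, where the relevant derivative is $1/\mu$ (respectively $1/\mu^2$) in $(0,1)$, yields $B\subseteq W^u(0,f)=W^s(0,f^{-1})$.

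The reverse inclusion $W^s(0,f)\subseteq A$ is the delicate step and the one I expect to be the main obstacle. Since $Df(0)$ is invertible, the inverse function theorem makes $f$ a $C^1$ local diffeomorphism at $0$ (even though $f$ is globally only a $C^1$ homeomorphism), so the stable manifold theorem applies: there is a neighbourhood $U$ of $0$ on which the local stable manifold $W^s_{loc}$ is a $C^1$ curve through $0$, tangent to the stable eigenspace (the axis $A$), and consisting precisely of the points of $U$ whose forward orbit stays in $U$ and tends to $0$. Writing $W^s_{loc}$ as a graph $y=\varphi(x)$ over $A$ with $\varphi(0)=\varphi'(0)=0$, the small points of $A$ converge to $0$ by the previous paragraph and, after shrinking $U$, stay in $U$, hence lie in $W^s_{loc}$; this forces $\varphi\equiv 0$, so that $W^s_{loc}=A\cap U$. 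Finally, if $p\in W^s(0,f)$ then $f^N(p)\in W^s_{loc}\subseteq A$ for $N$ large, and $f^{-1}$-invariance of $A$ gives $p\in A$. Therefore $W^s(0,f)=A=\Fix(\Z_2^a)$, and the mirror argument for $f^{-1}$ gives $W^u(0,f)=B=\Fix(\Z_2^b)$, which is the assertion with $j=a$ and $i=b$.

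In summary, the heart of the proof is the comparison of the local stable manifold with the invariant reflection axis: the forward inclusion is elementary monotone-map dynamics on an invariant line (reduced to $f^2$ in the twisted case via Lemma~\ref{lemOmega2}), while the reverse inclusion requires knowing that $W^s$ cannot leave the axis, which I secure through the tangency/graph identification $W^s_{loc}=A\cap U$ together with invariance. The two places demanding care are justifying the stable manifold theorem from the local diffeomorphism property at $0$, and, in case $b)$, transferring the dynamics of $f^2$ to $f$ and using that $f$ and $f^2$ share the same invariant manifolds.
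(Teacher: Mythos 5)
Your proof is correct and follows essentially the same route as the paper's: identify the two reflection axes as the $f$-invariant lines carrying $W^s$ and $W^u$, then run one-dimensional monotone dynamics of the restriction (passing to the second iterate in the twisted case) to show each axis is entirely contained in the corresponding global curve. The only difference is one of detail: you justify the reverse inclusion $W^s(0,f)\subseteq\Fix(\Z_2^a)$ via the $C^1$ stable manifold theorem, whereas the paper simply asserts that one invariant line contains the stable global curve; and in case $b)$ you work with $f^2$ restricted to the axis plus Lemma~\ref{lemOmega2}, while the paper analyses the orientation-reversing restriction $g$ directly using its oddness and $\Fix(g^2)=\{0\}$ --- the two are interchangeable.
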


\begin{proof}
Since there are  two reflections in  $D_2$, then there exist two $f$-invariant lines,  $\Fix(\Z_2^j)$, $j=a,b$,
containing the origin.
One of the two invariant lines contains the stable global curve and the other the  unstable one.
Without loss of generality, let $W^s(0,f)\subset\Fix(\Z_2^a)=\{(x,0)\ x\in\R\}$.
Let $g(x)$ be the first coordinate of $f(x,0)$.
Then $g\in\Hom(\R)$  is  $\Z_2^b$-equivariant, with  $g(0)=0$ and $\Fix(g)=\{0\}$.
Moreover, $\Fix(g^2)$ is the set of first coordinates of points in $\Fix(f^2)\cap \Fix(\Z_2^a)$.

We prove the result for   $W^s(0,f)$ as the proof for $W^u(0,f)$ is analogous.

In case $a)$ there exists $\alpha>0$ such that in  the interval $[0,\alpha)$, $g$ is a contraction,
hence $0<g(x)<x$.
Because $\Fix(g)=\{0\}$, then $g(x)<x$ for all $x>0$.
Since $g\in\Hom(\R)$, we have $g(x)>0$ for all $x>0$ and the result  for $W^s(0,f)$ follows.

In case $b)$, if the derivative $g^\prime(0)>0$ the proof of case $a)$ holds.
Otherwise, there is $\alpha>0$ such that $g$ maps $[0,\alpha)$ into $(-\alpha,0]$ as a contraction,
hence $-x<g(x)<0$ in that interval.
Then $-x<g(x)$ for all $x>0$ follows from $\Fix(g^2)=\{0\}$, and $g(x)<0$ holds since $g\in\Hom(\R)$,
proving the result for $W^s(0,f)$.
\end{proof}

\begin{theorem} \label{propD2Hirch} Let $f\in \Hom^{+}(\R^ 2)$ be $C^1$ with symmetry group $D_2$. Suppose that $\Fix(f)=\{0\}$ and $0$ is a local  direct saddle. Then 0 is a global saddle. In addition, the global curves $W^s(0,f)$ and $W^u(0,f)$ divide the plane in four connected components that are invariant by $f$.
\end{theorem}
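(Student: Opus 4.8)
The plan is to obtain the global-saddle conclusion as a direct consequence of Theorem~\ref{teorayoS}, and then to establish the partition of the plane by analysing how $f$ acts on the two invariant coordinate axes. For the first assertion, note that the symmetry group $D_2=\Z_2^a\oplus\Z_2^b$ is generated by two orthogonal reflections, so after a linear change of coordinates one of these reflections is the flip $\kappa$, giving $\kappa\in\Gamma$. Since $f$ is orientation preserving and $C^1$, has $\Fix(f)=\{0\}$, and $0$ is a local direct saddle, the hypotheses of case $a)$ of Theorem~\ref{teorayoS} are satisfied, and that theorem yields at once that $0$ is a global saddle.

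For the structure of the complement, I would invoke case $a)$ of Lemma~\ref{lemaWs}, so that (up to relabelling) $W^s(0,f)=\Fix(\Z_2^a)$ and $W^u(0,f)=\Fix(\Z_2^b)$. Because $\Gamma\subset GL(2)$, these reflections are linear and their fixed-point subspaces are two orthogonal lines through the origin; choosing coordinates so that $W^s$ is the $x$-axis and $W^u$ the $y$-axis, the union $W^s\cup W^u$ splits $\R^2$ into four open quadrants, which are exactly the connected components of the complement. This already gives the four components.

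To prove invariance, observe that both axes are $f$-invariant, so $f$ carries the cross $W^s\cup W^u$ onto itself and hence permutes the four quadrants, since a homeomorphism sends connected components of the complement to connected components. To identify this permutation I would restrict $f$ to each axis: the $D_2$-symmetry forces $Df(0)$ to commute with both reflection matrices, so $Df(0)=\mathrm{diag}(\lambda,\mu)$ with the axes as eigendirections, and $f$ acts on the $x$-axis by a homeomorphism $g$ of $\R$ with $g'(0)=\lambda\in(0,1)$ and on the $y$-axis by a homeomorphism $h$ with $h'(0)=\mu>1$. Positivity of the eigenvalues (direct saddle) makes $g$ and $h$ increasing near $0$, hence globally increasing as homeomorphisms of $\R$ fixing only $0$, so each of the four open half-axes is invariant. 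Since the closure of each quadrant contains exactly two of the half-axes and $f$ preserves each half-axis setwise, the image quadrant must be bounded by the same two half-axes, forcing every quadrant to map to itself.

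I expect the main obstacle to be this last invariance step, whose real content is ruling out a non-trivial permutation of the quadrants; the decisive ingredient is that the direct-saddle hypothesis makes the restrictions of $f$ to the invariant lines \emph{orientation preserving}, so the half-axes cannot be interchanged. Once that is in place, the statement that a homeomorphism preserving the two bounding half-axes of a sector must preserve the sector is a routine point-set argument, and no appeal to the dynamics beyond Lemma~\ref{lemaWs} and Theorem~\ref{teorayoS} is needed.
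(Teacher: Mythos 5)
Your proof is correct and follows essentially the same route as the paper: the identification $W^s(0,f)=\Fix(\Z_2^a)$, $W^u(0,f)=\Fix(\Z_2^b)$ via case $a)$ of Lemma~\ref{lemaWs} is the key step in both arguments, and the four invariant quadrants then fall out of the invariance of the half-axes. The only differences are cosmetic: the paper re-runs the freeness and trivial-dynamics argument (Lemma~\ref{propSaddleFree} and Proposition~\ref{freetridyn}) where you invoke the already-proved case $a)$ of Theorem~\ref{teorayoS}, and you spell out the quadrant-invariance step (the restrictions of $f$ to the invariant lines are increasing, so each half-axis, and hence each quadrant, is preserved) that the paper leaves implicit.
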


\begin{proof}
Lemma~\ref{propSaddleFree}  ensures that $f$ is free, and it the follows from  Proposition~\ref{freetridyn} that  $f$ has trivial dynamics.
Since there are  two reflections in  $D_2$, then there exist two $f$-invariant lines containing the origin. One of the two invariant lines contains the stable global curve and the other the unstable one.
%
By case $a)$ of Lemma~\ref{lemaWs} the stable global curve
$W^ s$ is the whole of one of the two invariant lines and $W^ u$ is the whole of the other. Hence, there are no homoclinic contacts and $W^{s}(0,f)$, $W^{u}(0,f)$ are unbounded.  Moreover, $\displaystyle{W^s(0,f)\cup W^u(0,f)}$ separates the plane in four connected components that are invariant by $f$.
\end{proof}

{  An example where  Theorem~\ref{propD2Hirch}  can be applied is 
$$
f(x,y)=\left( \frac{2x(1+x^2)}{4+x^2(1+x^2)^2},
\frac{8y(1+x^2)}{4+x^2(1+x^2)^2}
\right) .
$$
Note that $\Fix(f)=\{0\}$ because the first coordinate of $f(x,y)$ equals $x$ if and only if
$$
\left(x^2\left( 1+x^2\right)-2\right)\left( 1+x^2\right)=-4 .
$$
Since the expression $L(x)=x^2\left( 1+x^2\right)-2$ is an increasing function of $x$  satisfying $L(x)\ge -2$, 
and $L(x)>0$ for $x>1 $, then the left hand side cannot ever reach the value  $-4$.
}

\begin{proposition}\label{PD2} Let $f\in \Hom(\R^ 2)$ be of class $C^1$ and with symmetry group $D_2$ such that $0$ is a local saddle. Suppose  for some $\varepsilon>0$,
$\spec(f)\cap [1,1+\varepsilon[=\emptyset$,
and one of the following holds:
\begin{itemize}
\item[$a)$]
$f$ is orientation preserving and $0$ is a local direct saddle;
\item[$b)$]
there exist no $2-$periodic orbits;
\end{itemize}
then $0$ is a global saddle. 
In addition, the global curves $W^s(0,f)$ and $W^u(0,f)$ divide the plane in four connected components that either are $f$-invariant or are interchanged by $f$.
\end{proposition}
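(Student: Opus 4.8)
The plan is to reduce both cases to Theorem~\ref{propD2Hirch}, which already disposes of orientation preserving $D_2$-equivariant \emph{direct} saddles with a unique fixed point. The spectral hypothesis enters only to secure that uniqueness: since $0$ is a local saddle it is a fixed point, and $\spec(f)\cap[1,1+\varepsilon[=\emptyset$ forces $f$ to have at most one fixed point by Lemma~\ref{corPtoFijo}, so $\Fix(f)=\{0\}$ in both cases. In case $a)$ the map $f$ is orientation preserving with a local direct saddle at its unique fixed point, so Theorem~\ref{propD2Hirch} applies verbatim: it yields that $0$ is a global saddle and that $W^s(0,f)\cup W^u(0,f)$ splits the plane into four $f$-invariant components. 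This settles case $a)$, where all four components fall under the \emph{$f$-invariant} alternative of the statement.

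For case $b)$ I would pass to $f^2$. The absence of $2$-periodic orbits together with $\Fix(f)=\{0\}$ gives $\Fix(f^2)=\{0\}$; moreover $f^2\in\Hom^+(\R^2)$ is $C^1$ and $D_2$-equivariant (equivariance is preserved under composition, and any square is orientation preserving), and $0$ is a local \emph{direct} saddle for $f^2$, as recorded in Section~2. Applying Theorem~\ref{propD2Hirch} to $f^2$ then shows that $0$ is a global saddle for $f^2$ and that the four quadrants cut out by $W^s(0,f^2)$ and $W^u(0,f^2)$ are $f^2$-invariant. I would transfer these conclusions to $f$ exactly as in the proof of Theorem~\ref{teorayoS}$b)$: since $f^2$ is free, hence has trivial dynamics (Lemma~\ref{propSaddleFree} and Proposition~\ref{freetridyn}), every $\omega_2(p)$ and $\alpha_2(p)$ equals $\{0\}$ or $\infty$, so Lemma~\ref{lemOmega2} gives $\omega(p)=\omega_2(p)$ and $\alpha(p)=\alpha_2(p)$ for all $p$. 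Consequently $W^s(0,f)=W^s(0,f^2)$ and $W^u(0,f)=W^u(0,f^2)$, these curves are unbounded, every $p$ outside $W^s\cup W^u\cup\{0\}$ escapes to infinity in both time directions, and—because the homoclinic-contact set is defined purely from $W^s$, $W^u$ and their closures and limit sets—$f$ inherits the absence of homoclinic contacts from $f^2$. Thus $0$ is a global saddle for $f$.

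It remains to describe the four components in case $b)$, which is the only genuinely new point. By Lemma~\ref{lemaWs}$b)$—whose hypothesis $\Fix(f^2)\cap\Fix(\Z_2^a)=\Fix(f^2)\cap\Fix(\Z_2^b)=\{0\}$ is immediate from $\Fix(f^2)=\{0\}$—the curves $W^s(0,f)$ and $W^u(0,f)$ are precisely the two orthogonal invariant lines $\Fix(\Z_2^a)$ and $\Fix(\Z_2^b)$, so their union separates $\R^2$ into four open quadrants. Since $f$ is a homeomorphism carrying $W^s\cup W^u$ onto itself, it carries the complement onto itself and therefore permutes the four quadrants; let $\sigma$ denote the induced permutation. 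Because $f^2$ leaves each quadrant invariant, $\sigma^2=\mathrm{id}$, so $\sigma$ is an involution on a four-element set. Hence each quadrant is either fixed by $\sigma$, giving an $f$-invariant component, or exchanged with exactly one other, giving a pair interchanged by $f$; this is precisely the asserted dichotomy.

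The delicate part of the argument is the transfer step: one must verify that the invariant manifolds, their limit sets, and the partition into quadrants are \emph{literally the same objects} for $f$ and $f^2$, so that the strong conclusions proved for $f^2$ descend to $f$. Once the equalities $W^s(0,f)=W^s(0,f^2)$, $W^u(0,f)=W^u(0,f^2)$ and $\omega=\omega_2$, $\alpha=\alpha_2$ are in place, the involution observation converts the $f^2$-invariance of the quadrants into the ``invariant or interchanged'' statement for $f$ with no further computation, and the twisted case is absorbed automatically rather than treated by hand.
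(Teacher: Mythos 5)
Your proof follows the paper's argument exactly: uniqueness of the fixed point via Lemma~\ref{corPtoFijo}, case $a)$ by a direct appeal to Theorem~\ref{propD2Hirch}, and case $b)$ by passing to $f^2$ (where the absence of $2$-periodic orbits gives $\Fix(f^2)=\{0\}$) and transferring the conclusions back via Lemma~\ref{lemOmega2} as in Theorem~\ref{teoSaddle}. Your explicit involution argument showing that each quadrant is either $f$-invariant or exchanged with exactly one other is a correct filling-in of a detail the paper merely asserts in its closing sentence.
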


\begin{proof}
From Lemma~\ref{corPtoFijo} the origin is the only fixed point of $f$.
If $a)$ holds, then  $0$ is a global saddle
by Theorem \ref{propD2Hirch}.
If $b)$ holds, $0$ is a global saddle for $f^2$ and the proposition follows by Lemma~\ref{lemOmega2} as in Theorem~\ref{teoSaddle}. Notice that, in this case, the global curves $W^s(0,f)$ and $W^u(0,f)$ divide the plane in four connected components that may be interchanged by $f$.
\end{proof}

{    Proposition~\ref{PD2}  can be applied to
$$
f(x,y)=\left(2x(1+y^2),\frac{y}{3}
\right) 
\qquad\mbox{with}\qquad
Df(x,y)=\left(\begin{array}{ll}
2(1+y^2)&4xy\\ 0&1/3
\end{array}\right)
.
$$
The  first coordinate of $f(x,y)$  never equals $x$, so $\Fix(f)=\{0\}$.
Also  $\spec(f)\cap [1,2[=\emptyset$ and $f$ satisfies assumption {\sl a)} of the proposition.
Note that $f$ also satisfies assumption  {\sl b)} of the proposition,
since 
$$
f^2(x,y)=\left(4x(1+y^2)(1+\frac{y^2}{9}),  \frac{y^2}{9}\right) .
$$
}

Note that Example~\ref{exemploTwistedSaddle} is a map with symmetry group $D_2$,
where assumption $(b)$ fails.
So the existence of periodic orbits of period two is also relevant in the presence of $D_2$ symmetry.
This also shows that our hypotheses are minimal.

In the case of $\Z_2(\langle - Id \rangle)$ symmetry we have no reflection and consequently no $f$-invariant line.
We therefore cannot drop the diffeomorphism assumption. This case then proceeds as if there were no symmetries.

\section{Aplications}\label{secApp}

\subsection{Application to the Li\'enard equation} \label{secLienard}

In this section, we illustrate how our results can be used to study differential equations via the Poincar\'e map. The next example was inspired by the Li\'enard equation studied in \cite{CamposTorres} by J. Campos and P. J. Torres.

Consider the differential equation

\begin{equation} \label{lienard}
\ddot x + f(x) \dot x + g(x)=p(t),
\end{equation}
 where $ f, g: \R \to \R$ are locally Lipschitz maps of class $C^1$. Suppose in addition that the following assumptions holds:

 \begin{itemize}
 \item[(A1)] $p:\R \to \R$ is continuous and periodic with minimal period $T>0$;
 \item[(A2)] $f$ is bounded and $f(x)\geq 0$, for all $x\in \R$;
 \item[(A3)] $g$ is a strictly decreasing homeomorphism;
 \item[(A4)] $\exists c,d\geq 0$ such that $\displaystyle{\abs{g(x)}\leq c+d\abs{x}},$ for all $x\in \R$.
 \end{itemize}

The assumptions on $f$, $g$ and $p$ guarantee the existence and uniqueness of solutions  of the initial value problem associated to \eqref{lienard}. 

The solutions of  \eqref{lienard} are the first coordinates of those of:

\begin{equation}\label{vecfieldlienard}
\left\{
\begin{array}{ccc} \dot{x} & = & y-F(x) \\ & & \\
\dot{y} & = &-g(x)+p(t)\end{array}
\right.
\end{equation}
where $\displaystyle{F(x)=\int_{0}^{x} f(s)ds}$.

For each $q\in \R^2$, consider the solution $u(t,q)$ of \eqref{vecfieldlienard} with initial value $u(0)=q$. Let $P(q)=u(T,q)$ be the Poincar\'e map associated to \eqref{vecfieldlienard}. By  uniqueness of solutions, $P$ is well defined and injective. By  continuous dependence on initial conditions, $P$ is continuous.

Since $f$ is bounded, $p$ continuous and periodic and $g$ verifies Assumption (A4), all solutions of \eqref{vecfieldlienard} are defined in the future and in the past. Consequently, $P$ is defined in $\R^2$ and $P(\R^2)=\R^2$. Thus, $P\in \Hom(\R^2)$.

By  differentiable dependence on initial conditions and the Jacobi-Liouville Formula we have
$$
0< \det P'(p)=exp{\int_{0}^{T}\; div_{x}X(t,u(t,p))\; dt},
$$
where $X(t,x,y)=(y-F(x),-g(x)+p(t))$. Hence $P\in \Diff^{+}(\R^2)$.


In addition, by the  sub-supersolution method,  (A1) and (A3) imply the existence of a $T$-periodic solution $u(s)$ of \eqref{lienard}. Actually, by (A1) there exist  $a,b\in \R$ such that $a\leq p(t) \leq b$, $\forall t \in \R$.  Considering  $\beta=g^{-1}(a)$ and $\alpha=g^{-1}(b)$,  Condition (A3) implies that $\forall t \in \R$, $\alpha \leq \beta$ and 
\begin{eqnarray} 
\ddot{\alpha}+f(\alpha)\dot{\alpha}+g(\alpha)\geq p(t) \nonumber\\
\ddot{\beta}+f(\beta)\dot{\beta}+g(\beta)\leq p(t) \nonumber .
\end{eqnarray}

Note that $T$-periodic solutions of \eqref{vecfieldlienard} are fixed points of $P$ and that stability of $T$-periodic solutions corresponds to stability of these fixed points. Hence $P$ has a fixed point.

\begin{proposition}\label{proplienard}
The $T$-periodic solution of \eqref{vecfieldlienard} is unique and 
a global saddle for the Poincar\'e map.
\end{proposition}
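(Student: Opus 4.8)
The plan is to verify the hypotheses of Corollary~\ref{coroHirsch} for the Poincar\'e map $P$. The text already establishes $P\in\Diff^+(\R^2)$ and the existence of at least one fixed point $p_0$, namely the given $T$-periodic solution. It therefore remains to prove that $p_0$ is the \emph{only} fixed point and that it is a \emph{local direct saddle}; Corollary~\ref{coroHirsch}, applied after translating $p_0$ to the origin, then yields that $p_0$ is a global saddle.

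Both facts rest on the same linearisation. Along any orbit $u(t,q)$ the variational equation of \eqref{vecfieldlienard} is $\dot w=A(t)w$ with
$$
A(t)=\begin{pmatrix}-f(x(t)) & 1\\ -g'(x(t)) & 0\end{pmatrix},
$$
and $DP(q)=\Phi_q(T)$ is its time-$T$ monodromy. Setting $a=f\ge 0$ and $b=-g'>0$ (by (A2) and (A3)), a check of signs on the coordinate axes shows that the first and third quadrants of the $w$-plane are forward invariant and the second and fourth are backward invariant. Since moreover $\det\Phi_q(T)=\exp\!\big(-\int_0^T a\,dt\big)>0$, the matrix $DP(q)$ has two positive real eigenvalues $0<\lambda\le\mu$, with the unstable eigendirection in the forward-invariant first quadrant and the stable one in the backward-invariant fourth quadrant.

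The crux --- and the step I expect to be the real obstacle --- is to show that the eigenvalues straddle $1$, that is $\lambda<1<\mu$; the sign hypotheses make $\lambda<1$ transparent but not $\mu>1$. Tracking the first coordinate along the eigendirections gives $\log\mu=\int_0^T(m_u-a)\,dt$ and $\log\lambda=\int_0^T(m_s-a)\,dt$, where $m_u>0$ and $m_s<0$ are the slopes of the unstable and stable eigendirections. Adding these and comparing with $\log\lambda+\log\mu=\int_0^T\operatorname{tr}A\,dt=-\int_0^T a\,dt$ forces $\int_0^T(m_u+m_s)\,dt=\int_0^T a\,dt$, so that $\log\mu=-\int_0^T m_s\,dt>0$ because $m_s<0$; hence $\mu>1$, while $\log\lambda=\int_0^T(m_s-a)\,dt<0$ gives $\lambda<1$. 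Taking $q=p_0$ shows that $p_0$ is a local direct saddle.

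For uniqueness, suppose $u_1,u_2$ are two $T$-periodic solutions and set $w=u_1-u_2$. By the mean value theorem $w$ solves \emph{exactly} a system $\dot w=A(t)w$ of the same form, now with $T$-periodic coefficients $\bar f(t)=\big(F(x_1)-F(x_2)\big)/(x_1-x_2)\ge 0$ and $\bar g(t)=\big(g(x_1)-g(x_2)\big)/(x_1-x_2)<0$. Being a difference of $T$-periodic maps, $w$ is $T$-periodic, so $w(0)$ is fixed by the monodromy $\Phi(T)$; but the computation above shows this monodromy has no eigenvalue equal to $1$, whence $w\equiv 0$ and $u_1=u_2$. Thus $\Fix(P)=\{p_0\}$, and Corollary~\ref{coroHirsch} finishes the proof.
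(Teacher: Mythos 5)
Your proposal is correct in substance and reaches the conclusion of the paper, but by a genuinely different route at the two places where the real work is done. The skeleton is the same as the paper's: $P\in\Diff^{+}(\R^2)$, the periodic solution is a local direct saddle, it is the unique fixed point, and a global result from Section~\ref{secGlobalSaddle} finishes. For the local saddle, the paper linearises along $u(s)$, computes the pointwise eigenvalues $\lambda_{\pm}(s)$ of $A(s)$, and asserts that the multipliers of $DP$ are $e^{\int_0^T\lambda_{\pm}(s)\,ds}$; this Floquet-type identity is not valid for a general non-commuting family $A(s)$, so your cone argument (forward invariance of the first and third quadrants, backward invariance of the second and fourth, combined with the Liouville identity $\log\lambda+\log\mu=-\int_0^T f(x(t))\,dt$) is actually the more defensible way to obtain $0<\lambda<1<\mu$. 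The price is that you must check that the slopes $m_u,m_s$ are well defined (i.e.\ $\xi\ne 0$ along the eigen-solutions, which your invariance argument does deliver) and that $m_s<0$ on a set of positive measure; this uses the strict decrease of $g$, but the paper's own claim that $f(u(s))^2-4g'(u(s))>0$ rests on the same strictness, so you are no worse off. For uniqueness, the paper invokes \cite[Theorem 3.2]{CamposTorres} on the structure of bounded solutions, using the stable branch of the local saddle to produce a solution bounded in the future; your mean-value argument, which reduces uniqueness to the absence of the multiplier $1$ for a monodromy matrix with the same sign pattern, is self-contained and arguably cleaner, though you should say explicitly that $\bar g$ is extended by $g'(x_1(t))$ at times where $x_1=x_2$, and that the degenerate case $w_1\equiv 0$ forces $w\equiv 0$ directly from the first equation. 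Finally, you close with Corollary~\ref{coroHirsch}, which needs only $\Fix(P)=\{0\}$ and a direct saddle, whereas the paper cites Theorem~\ref{teoSaddle}, which nominally also requires $\Fix(P^2)=\{0\}$; for a direct saddle your choice is the tighter fit.
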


\begin{proof}

Let $P$ be the Poincar\'e map associated to \eqref{vecfieldlienard}. Next we show that the periodic solution $u(s)$ of \eqref{vecfieldlienard}  is a direct saddle of $P\in \Diff^{+}(\R^2)$.

Consider the linearisation of \eqref{vecfieldlienard}  around the periodic solution $u(s)$:
$$
\left(\begin{array}{c}\dot x(t)\\ \dot y(t)\end{array}\right)=A(s)
\left(\begin{array}{c}x(t)\\ y(t)\end{array}\right)
\qquad\mbox{where}\qquad
A(s)=\left(\begin{array}{ll}-f(u(s))&1\\ -g^\prime(u(s))&0\end{array}\right).
$$
The eigenvalues of $A(s)$ are $\lambda_{\pm}(s)=\left(-f(u(s))\pm\sqrt{f(u(s))^2-4g^\prime(u(s))}\right)/2$.

Since $g$ is monotonically decreasing and $f(x)\geq 0$ for all $x\in\R$, then
$f(u(s))^2-4g^\prime(u(s))>0$ and $f(u(s))<\sqrt{k^2-4g^\prime(u(s))}$. Consequently,
$\lambda_{+}(s)>0$ and $\lambda_{-}(s)<0$. 
The eigenvalues of the linearisation of $P$ around
the origin are
$\displaystyle{\rm e}^{\int_0^T\lambda_{\pm}(s)ds}$.

Since $u(s)$ corresponds to a local saddle of $P$,  there are solutions of  \eqref{vecfieldlienard} bounded in the future and by \cite[Theorem 3.2]{CamposTorres},  $P$ has a unique fixed point. The proposition follows from Theorem \ref{teoSaddle}.
\end{proof}

\subsection{Equivariant dynamics}

In the symmetric case, we have the following:

\begin{theorem}
Let $\dot x=X(t,x)$ be such that $\gamma X(t,x)=X(t,\gamma x)$ for $\gamma \in O(2)$.
Define the time-$T$ map $P(\xi)=x(T;0,\xi)$, where $x(T;t_0,\xi)$ is the solution satisfying the initial condition $\xi$ at $t=t_0$. 
Then $\gamma P(\xi)=P(\gamma\xi)$.
\end{theorem}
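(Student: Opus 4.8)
The plan is to show that the curve $t\mapsto \gamma\, x(t;0,\xi)$ is itself a solution of $\dot x = X(t,x)$, namely the one issuing from $\gamma\xi$ at $t=0$, and then to invoke uniqueness of solutions to identify it with $x(\cdot\,;0,\gamma\xi)$; evaluating at $t=T$ then gives the claim. The whole argument rests on two facts: that $\gamma\in O(2)\subset GL(2)$ acts \emph{linearly}, so differentiation commutes with $\gamma$, and that the vector field is $O(2)$-equivariant.

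Concretely, I would fix $\xi\in\R^2$ and $\gamma\in O(2)$, set $x(t)=x(t;0,\xi)$, and define $y(t)=\gamma\, x(t)$. Since $\gamma$ is linear, $\dot y(t)=\gamma\,\dot x(t)=\gamma\, X(t,x(t))$. Applying the equivariance hypothesis $\gamma X(t,z)=X(t,\gamma z)$ with $z=x(t)$ turns this into $\dot y(t)=X(t,\gamma x(t))=X(t,y(t))$, so $y$ solves the same non-autonomous equation. As $y(0)=\gamma x(0)=\gamma\xi$, the curve $y$ is a solution with initial value $\gamma\xi$ at $t=0$.

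Finally, uniqueness of the initial value problem (guaranteed by the standing regularity assumptions that make $P$ well defined) forces $y(t)=x(t;0,\gamma\xi)$ for all $t$. Evaluating at $t=T$ yields
$$
\gamma P(\xi)=\gamma\, x(T;0,\xi)=y(T)=x(T;0,\gamma\xi)=P(\gamma\xi),
$$
which is the desired identity. I expect essentially no obstacle here: the only points requiring care are the linearity of the $O(2)$-action, which is what lets $\gamma$ pass through the time derivative, and the availability of uniqueness of solutions. If uniqueness were dropped, the computation would only show that $\gamma P(\xi)$ is \emph{a} value of the flow through $\gamma\xi$, so I would keep the uniqueness hypothesis explicit in the statement.
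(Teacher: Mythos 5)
Your proposal is correct and is exactly the argument the paper has in mind: show that $\gamma\, x(t;0,\xi)$ solves the same initial value problem as $x(t;0,\gamma\xi)$ (using linearity of $\gamma$ to commute it past the derivative and the equivariance of $X$), then invoke uniqueness and evaluate at $t=T$. The paper merely compresses this to one sentence, so your write-up is the same proof with the details filled in.
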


\begin{proof}
It is straightforward to verify that $\gamma x(t;0,\xi)$ and $x(t;0,\gamma\xi)$  satisfy the same initial condition. Hence, they coincide at time $T$.
\end{proof}

Note that if $P$ is to be the Poincar\'e map around a periodic solution and have symmetry $D_2$ then $P$ has a fixed point at the origin. 
Hence, in order to apply our results to a generic differential equation, this has to be first transformed to bring the periodic solution to the origin.
As an illustration of such a transformed system, consider:
$$
\left\{\begin{array}{l}
\dot x= \alpha x+ f_1(x,y)\\
\dot y=-\beta y +f_2(x,y)\\
\dot z=1
\end{array}\right.
\qquad \alpha, \beta >0
$$
such that $f_i(x,y)=O(|(x,y)|^2)$ and $f=(f_1,f_2)$ is $D_2$-equivariant, and either 
$\dot x\ne 0$ or $\dot y\ne 0$ for $(x,y)\ne (0,0)$.
The linear part of $P$ is given by $(x,y)\mapsto\left(e^\alpha x,e^{-\beta}y\right)$
and,
{  by Theorem~\ref{propD2Hirch}, }the origin is a global saddle.

\paragraph{Acknowledgements:}
B. Alarc\'on thanks
Prof. Pedro Torres for his hospitality and fruitful conversations during her stay 
at the University of Granada, Spain. She also thanks Prof. Christian Bonatti for giving her an example which improved her understanding of global saddles. Centro de Matem\'atica da Universidade do Porto (CMUP --- UID/MAT/00144/2013) is funded by FCT (Portugal) with national (MEC) and European structural funds through the programs FEDER, under the partnership agreement PT2020.
B.\ Alarc\'{o}n was also supported in part by CAPES from Brazil and grants MINECO-15-MTM2014-56953-P from Spain and CNPq 474406/2013-0 from Brazil.

%
%
%

\end{document}